\newcolumntype{P}[1]{>{\centering\arraybackslash}p{#1}}
\newcommand{\R}{\mathbb{R}}
\newcommand{\mc}{\mathcal}
\newtheorem{theorem}{Theorem}
\newtheorem{proposition}{Proposition}
\newtheorem{lemma}{Lemma}
\newtheorem*{lemma*}{Lemma}
\newtheorem{definition}{Definition}
\newcommand{\bd}{\mathbf}
\title{Optimal Control Via Neural Networks: A Convex Approach}
\author{
	\IEEEauthorblockN{Yize Chen\IEEEauthorrefmark{1}\footnote{* Authors contribute equally.}, Yuanyuan Shi\IEEEauthorrefmark{1} and Baosen Zhang}\\
	\IEEEauthorblockA{Department of Electrical Engineering, University of Washington, Seattle, WA, USA
		\\\{yizechen, yyshi, zhangbao\}@uw.edu}
}
\begin{document}

\maketitle
\begin{abstract}
Control of complex systems involves both system identification and controller design. Deep neural networks have proven to be successful in many identification tasks, however, from model-based control perspective, these networks are difficult to work with because they are typically nonlinear and nonconvex. Therefore many systems are still identified and controlled based on simple linear models despite their poor representation capability.
In this paper we bridge the gap between model accuracy and control tractability faced by neural networks, by explicitly constructing networks that are convex with respect to their inputs. We show that these input convex networks can be trained to obtain accurate models of complex physical systems. In particular, we design input convex recurrent neural networks to capture temporal behavior of dynamical systems. Then optimal controllers can be achieved via solving a convex model predictive control problem. Experiment results demonstrate the good potential of the proposed input convex neural network based approach in a variety of control applications. In particular we show that in the MuJoCo locomotion tasks, we could achieve over $10\%$ higher performance using $5\times$ less time compared with state-of-the-art model-based reinforcement learning method; and in the building HVAC control example, our method achieved up to $20\%$ energy reduction compared with classic linear models.
\end{abstract}


\section{Introduction}
Decisions on how to best operate and control complex physical systems such as the power grid, commercial and industrial buildings, transportation networks and robotic systems are of critical societal importance. These systems are often challenging to control because they tend to have complicated and poorly understood dynamics, sometimes with legacy components are built over a long period of time~\cite{Wolf09}. Therefore detailed models for these systems may not be available or may be intractable to construct. For instance, since buildings account for 40\% of the global energy consumption~\cite{cheng2008kyoto}, many approaches have been proposed to operate buildings more efficiently by controlling their heating, ventilation, and air conditioning~(HVAC) systems~\cite{zhang2017distributed}. Most of these methods, however, suffer from two drawbacks. On one hand, a detailed physics model of a building can be used to accurately describe its behavior, but this model can take years to develop. On the other hand, simple control algorithms have been developed by using linear (RC circuit) models~\cite{ma2012predictive} to represent buildings, but the performance of these models may be poor since the building dynamics can be far from linear~\cite{shaikh2014review}.

In this paper, we leverage the availability of data to strike a balance between requiring painstaking manual construction of physics based models and the risk of not capturing rich and complex system dynamics through models that are too simplistic. In recent years---with the growing deployment of sensors in physical and robotics systems---large amount of operational data have been collected, such as in smart buildings~\cite{suryadevara2015wsn}, legged robotics~\cite{meger2015learning} and manipulators~\cite{deisenroth2011learning}. Using these data, the system dynamics can be learned directly and then automatically updated at periodic intervals. One popular method is to parameterize these complex system dynamics using deep neural networks to capturing complex relationships~\cite{he2016deep,vaswani2017attention}, yet few research investigated how to integrate deep learning models into real-time closed-loop control of physical systems.

A key reason that deep neural networks have not been directly applied in control is that even though they provide good performances in learning system behaviors, optimization on top of these networks is challenging~\cite{kawaguchi2016deep}.
Neural networks, because of their structures, are generally not convex from input to output. Therefore, many control applications~(e.g., where real-time decisions need to be made) choose to favor the computational tractability offered by linear models despite their poor fitting performances.

In this paper we tackle the modeling accuracy and control tractability tradeoff by building on the input convex neural networks~(ICNN) in \cite{amos2017input} to both represent system dynamics and to find optimal control policies. By making the neural network convex from input to output, we are able to obtain \emph{both good predictive accuracies and tractable computational optimization problems.} The overall methodology is shown in Fig.~\ref{fig:model_fig1}. Our proposed method (shown in Fig.~\ref{fig:model_fig1} (b)) firstly utilizes an input convex network model to learn the system dynamics and then computes the best control decisions via solving a convex model predictive control (MPC) problem, which is tractable and has  optimality guarantees. This is different from existing methods that uses  model-free end-to-end controller which directly maps input to output (shown in  Fig.~\ref{fig:model_fig1} (a)).
Another major contribution of our work is that we explicitly prove that ICNN can represent all convex functions and systems dynamics, and is \emph{exponentially} more efficient than widely used convex piecewise linear approximations~\cite{magnani2009convex}.

\begin{figure}[ht]
	\vspace{-10pt}
	\begin{center}
		\includegraphics[scale=0.62]{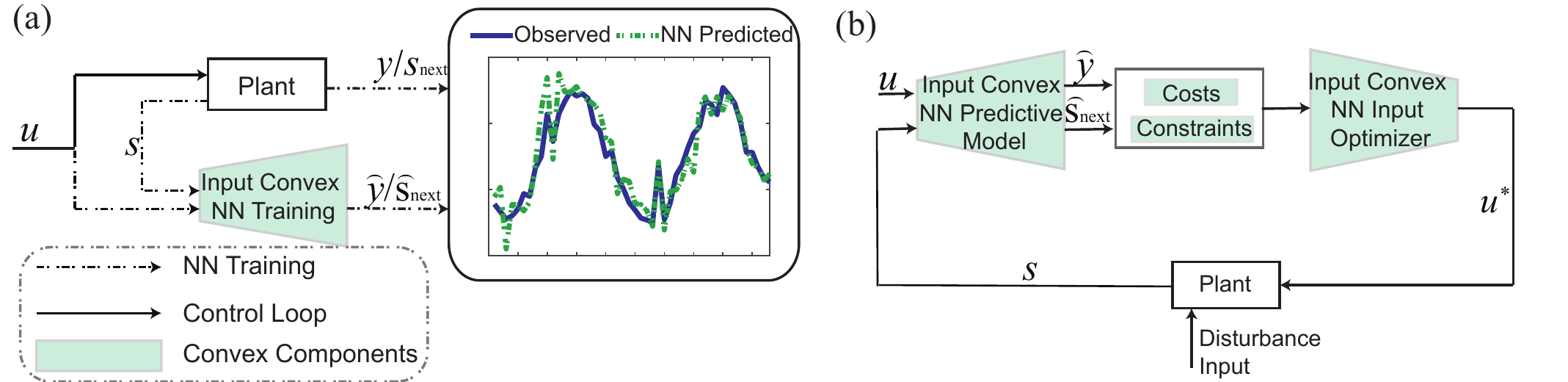}
	\end{center}
	\caption{Our proposed model-based method, (a) an input convex neural network is first trained to learn the system dynamics, then (b) we solve a convex predictive control problem to find the optimal actions which are input convex neural networks' inputs. The optimization steps are also based on objectives and dynamics constraints represented by the trained networks.}    	\label{fig:model_fig1}
\end{figure}

\subsection{Related Work}
The work in \cite{amos2017input} was an impetus for this paper. The key differences are that the goal in \cite{amos2017input} is to show that ICNN can achieve similar classification performances as conventional neural networks and how the former can be used in inference and prediction problems. Our goal is to use these networks for optimization and closed-loop control, and in a sense that we are more interested in the overall system performances and not directly the performance of the networks. We also extend the class of networks to include RNNs to capture dynamical systems.

Control and decision-making have used deep learning mainly in model-free end-to-end controller settings (shown in Fig.~\ref{fig:model_fig1} (a)), such as sequential decision making in game~\cite{mnih2013playing}, robotics manipulation~\cite{levine2014learning, levine2016end}, and control of cyber-physical systems~\cite{Wei2017, o2010residential}. However, much of the success relies heavily on a reinforcement learning setup where the optimal state-action relationship can be learned via a large number of samples. However, many physical systems do not fit into the reinforcement learning process, where both the sample collection is limited by real-time operations, and there are physical model constraints hard to represent efficiently.

To address the above sample efficiency, safety and model constraints incompatibility concerns faced by model-free reinforcement learning algorithms in physical system control, we consider a model-based control approach in this work.
Model-based control algorithms often involve two stages -- system identification and controller design. For the system identification stage, the goal is to learn a fixed form of system model to minimize some prediction error~\cite{ljung1998system}. Most efficient model-based control algorithms have used a relatively simple function estimator for the system dynamics identification~\cite{nagabandi2018neural}, such as linear model~\cite{ma2012predictive} and Gaussian processes~\cite{meger2015learning, deisenroth2011learning}. These simplified models are sample-efficient to learn, and can be nicely incorporated in the sub-sequent optimal control problems. However, such simple models may not have enough representation capacity in modeling large-scale or high-dimension systems with nonlinear dynamics. Deep neural networks (DNNs) feature powerful representation capability, while the main challenge of using DNNs for system identification is that such models are typically highly non-linear and non-convex \cite{kawaguchi2016deep}, which causes great difficulty for following decision making. A recent work from~\cite{nagabandi2018neural} is close in spirit as our proposed method. Similarly, the authors use a model-based approach for robotics control, where they first fit a neural network for the system dynamics and then use the fitted network in an MPC loop. However, since~\cite{nagabandi2018neural} use conventional NN for system identification, they cannot solve the MPC problem to global optimality. Our work shows how the proposed ICNN control algorithm achieves the benefits from both sides of the world. The optimization with respect to inputs can be implemented using off-the-shelf deep learning optimizers, while we are able to obtain good identification accuracies and tractable computational optimization problems by using proposed method at the same time.

\section{Closed-loop control with input convex neural networks} \label{sec:model}
In this paper, we consider the settings where a neural network is used in a closed-loop system. The fundamental goal is to optimize system performance which is beyond the learning performance of network on its own. In this section we describe how input convex neural networks~(ICNN) can be extremely useful in these systems by considering two related problems. First, we show how ICNN perform in single-shot optimization problems. Then we extend the results to an input convex \emph{recurrent} neural networks~(ICRNN), which allows us to both capture systems' complex dynamics and make time-series decisions.
\begin{figure}[h!]
	\centering
	\includegraphics[scale=0.76]{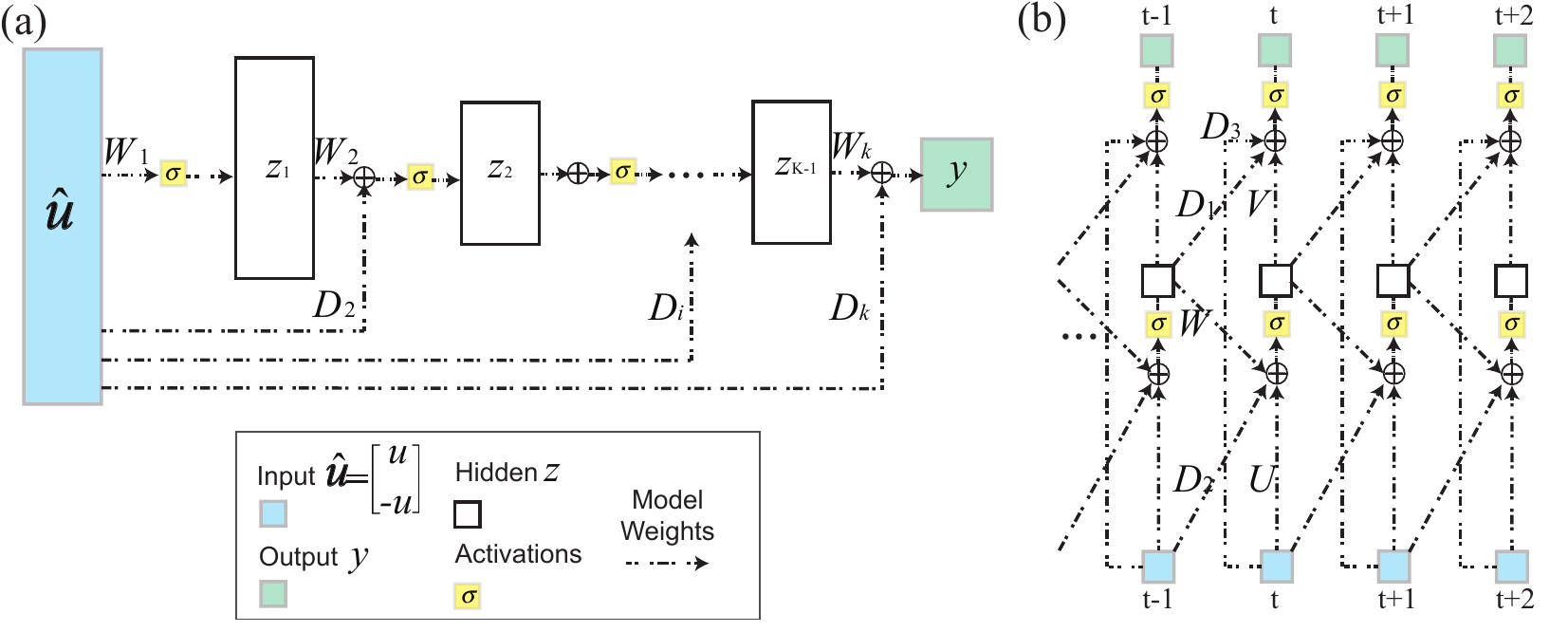}
	\caption{Input convex neural network. Input convex neural network. (a)~Input convex feed-forward neural networks~(ICNN). One notable addition is the direct ``passthrough'' layers $\bd D_{2:k}$ that connect the inputs to hidden units for better model representation ability. (b)~The proposed input convex recurrent neural networks~(ICRNN) architectures. In our control settings, we keep all weights in both networks nonnegative, while expanding the inputs with $-\bd u$.}
	\label{architecture}
\end{figure}

\subsection{Single-shot problem}
The following proposition states a simple sufficient condition for a neural network to be input convex:
\begin{proposition}\label{prop:oneshot}
	The feedforward neural network in Fig.~\ref{architecture}(a) is convex from input to output given that all weights between layers $\bd W_{1:k}$ and weights in the ``passthrough'' layers $\bd D_{2:k}$ are non-negative, and all of the activation functions are convex and nondecreasing (e.g. ReLU).
\end{proposition}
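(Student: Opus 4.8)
The plan is to prove the statement by induction on the depth of the network, establishing the slightly stronger claim that \emph{every coordinate of every hidden layer is a convex function of the network input}. Write $\bd z_0$ for the input vector and, for $i=1,\dots,k$, let $\bd z_i = g_i(\bd W_i \bd z_{i-1} + \bd D_i \bd z_0 + \bd b_i)$ denote the $i$-th layer, where $g_i$ is the coordinatewise activation, $\bd b_i$ the bias, and the passthrough term $\bd D_i \bd z_0$ is present for $i=2,\dots,k$; the network output is $\bd z_k$ (with $g_k$ possibly the identity). Throughout I will use three elementary facts from convex analysis: (i) the composition of a convex function with an affine map is convex; (ii) a nonnegative weighted sum of convex functions is convex; (iii) if $h$ is convex and nondecreasing and $f$ is convex, then $h\circ f$ is convex.

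For the base case $i=1$, the pre-activation $\bd W_1 \bd z_0 + \bd b_1$ is affine in $\bd z_0$, hence convex in each coordinate, and since $g_1$ is convex each coordinate of $\bd z_1$ is convex by fact (i). For the inductive step, assume each coordinate of $\bd z_{i-1}$ is convex in $\bd z_0$. I would first show the pre-activation $\bd a_i := \bd W_i \bd z_{i-1} + \bd D_i \bd z_0 + \bd b_i$ is convex coordinatewise: because $\bd W_i$ is entrywise nonnegative, each coordinate of $\bd W_i \bd z_{i-1}$ is a nonnegative combination of the convex coordinates of $\bd z_{i-1}$, hence convex by (ii); the term $\bd D_i \bd z_0 + \bd b_i$ is affine; and a sum of convex functions is convex. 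Then, since $g_i$ is convex and nondecreasing, fact (iii) gives that each coordinate of $\bd z_i = g_i(\bd a_i)$ is convex. Taking $i=k$ completes the induction and shows the output is convex in the input.

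The genuinely load-bearing step is the inductive one, and the place I would be most careful is the joint use of the nonnegativity of $\bd W_i$ and the monotonicity of $g_i$. Nonnegativity of $\bd W_i$ is exactly what keeps $\bd W_i \bd z_{i-1}$ convex rather than a sum of convex and concave pieces; monotonicity of $g_i$ is exactly what lets convexity survive the nonlinearity via fact (iii) --- a convex but non-monotone activation would break the argument. The passthrough weights $\bd D_i$, by contrast, enter only through the affine term $\bd D_i \bd z_0$, so they pose no obstacle to convexity; their nonnegativity is not needed for this proposition but is imposed because of the expanded input $-\bd u$ used later in the control setting. Hence the only real work is verifying that the two interlocking hypotheses --- on $\bd W_{1:k}$ and on the activations --- propagate convexity through every layer, which the induction above does.
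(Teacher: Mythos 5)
Your proof is correct and follows essentially the same route as the paper, which simply invokes the composition rules for convex functions (affine precomposition, nonnegative sums, and convex nondecreasing outer compositions); you have merely made the layer-by-layer induction explicit. Your side observation that nonnegativity of the passthrough weights $\bd D_{2:k}$ is not needed for convexity of a single network, but matters for the later composition/control setting, is also consistent with the paper's discussion.
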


The structure of the input convex neural network (ICNN) structure in Proposition~\ref{prop:oneshot} is motivated by the structure in~\cite{amos2017input} but modified to be more suitable to control of dynamical systems. In ~\cite{amos2017input} it only requires $\bd W_{2:k}$ to be non-negative while having no restrictions on weights $\bd W_1$ and $\bd D_{2:k}$. Our construction achieves the exact representation by expanding the inputs to include both $\bd u$ ($\in \R^{d}$) and $-\bd u$. Then any negative weights in $\bd W_1$ and $\bd D_{2:k}$ in~\cite{amos2017input}'s  ICNN structure is set to zero and its negation (which is positive) is added as the  weight for corresponding $-\bd u$. The reason for our construction is to allow the network to be ``rolled out in time'' when we are dealing with dynamical systems and multiple networks need to be composed together.

An simple example that demonstrates how the proposed ICNN can be used to fit a convex function comes form fitting the $|u|$ function. This function is convex and both decreasing and increasing. Let the activation function be $ReLU(\cdot) = \max(\cdot, 0)$. We can write $|u| = - u + 2 ReLU(u)$~\cite{amos2017input}. However, in this representation, we need a negative weight, the $-1$ in front of $u$, and this would be troublesome if we compose several networks together. In our proposed ICNN structure with all positive weights and input negation duplicates, we can write $|u| = v + 2 ReLU(u)$, where we impose a constraint $v=-u$. Such doubline on the number of input variables may potentially make the network harder to train. Yet during control, having all of the weights positive maintains the convexity between inputs and outputs even if multiple steps are considered which will be discussed in Section \ref{sec:control}. The constraint $v=-u$ is linear and can be easily included in any convex optimization.

This proposition follows directly from composition of convex functions~\cite{boyd2004convex}. Although it allows for any increasing convex activation functions, in this paper we work with the popular ReLU activation function. Two notable additions in ICNN compared with conventional feedforward neural networks are: 1) Addition of the direct \emph{``passthrough'' layers} connecting inputs to hidden layers and conventional \emph{feedforward layers} connecting hidden layers for better representation power. 2) the expanded inputs that include both $\bd u$ and $-\bd u$. The proposed ICNN structure is shown in Fig. \ref{architecture}(a). Note that such construction guarantees that the network is convex and non-decreasing with respect to the expanded inputs $\hat{\bd u}=\begin{bmatrix}\bd u \\-\bd u\end{bmatrix}$, while the output can achieve either decreasing or non-decreasing functions over $\bd u$.

Fundamentally, ICNN allows us to use neural networks in decision making processes by guaranteeing the solution is unique and globally optimal. Since many complex input and output relationships can be learned through deep neural networks, it is natural to consider using the learned network in an optimization problem in the form of
\begin{subequations} \label{eqn:opt_single}
	\begin{align}
	\min_{\bd u} &\; f(\bd u; \bd W) \\
	\mbox{s.t. } &\; \bd u \in \mc{U},
	\end{align}
\end{subequations}
where $\mc{U}$ is a convex feasible space. Then if $f$ is an ICNN, optimizing over $\bd u$ is a convex problem, which can be solved efficiently to global optimality. Note that we will always duplicate the variables by introducing $\bd v =-\bd u$, but again this does not change the convexity of the problem. Of course, since the weights of the network are restricted to be nonnegative, the performance of the network (e.g., classification) may be worse. A common thread we observe in this paper is that trading off classification performance with tractability can be preferable.

\subsection{Closed-loop control and recurrent neural networks}
\label{sec:control}
In addition to the single-shot optimization problem in \eqref{eqn:opt_single}, we are interested in optimally controlling a dynamical system. To model the temporal dependency of the system dynamics, we propose to use recurrent neural networks (instead of feed-forward neural networks). Recurrent networks carry an internal state of the system, which introduces coupling with previous inputs to the system. Fig.~\ref{architecture}(b) shows the proposed input convex recurrent neural networks (ICRNN) structure. This network maps from input $\hat{\bd u}$ to output $y$ with memory unit $\bd z$ according to the following Eq.  \eqref{conv:output},
\begin{align}\label{conv:output}
	\bd z_t &= \sigma_1(\bd U \hat{\bd u}_t + \bd W \bd z_{t-1} + \bd D_2 \hat{\bd u}_{t-1})\,,\\
	y_t &= \sigma_2(\bd V \bd z_t +\bd  D_1 \bd z_{t-1} + \bd D_3 \hat{\bd u}_t)\,,
\end{align}
where $\hat{\bd u} = \begin{bmatrix}\bd u \\-\bd u\end{bmatrix}$, and $D_1, D_2, D_3$ are added direct ``passthrough'' layers for augmenting representation power. If we unroll the dynamics with respect to time, we have $y_t =f(\hat{\bd u}_1, \hat{\bd u}_2, ..., \hat{\bd u}_t; \theta)$ where $\theta = \bd{[U, V, W, D_1, D_2, D_3]}$ are network parameters, and $\sigma_1, \sigma_2$ denote the nonlinear activation functions. The next proposition states a sufficient condition for the network to be input convex.

\begin{proposition}\label{proposition:ICRNN2}
	The network shown in Fig.~\ref{architecture}(b) is a convex function from inputs to output if all weights $U, V, W, D_1, D_2, D_3$
	are non-negative, and all activation functions are convex and nondecreasing (e.g. ReLU).
\end{proposition}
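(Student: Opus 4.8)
The plan is to prove the statement by induction on the time index $t$, propagating \emph{two} properties simultaneously through the unrolled recursion: that each coordinate of the hidden state $\bd z_t$ is (i) convex and (ii) nondecreasing as a function of the stacked input history $(\hat{\bd u}_1,\dots,\hat{\bd u}_t)$, with the initial state $\bd z_0$ and $\hat{\bd u}_0$ treated as fixed constants. Carrying monotonicity alongside convexity is the essential point, since convexity by itself is not preserved under composition: the vector composition rule \cite{boyd2004convex} states that if $h$ is convex and nondecreasing in each of its arguments and $g_1,\dots,g_m$ are convex, then $\bd x \mapsto h(g_1(\bd x),\dots,g_m(\bd x))$ is convex. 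To \emph{apply} this rule at step $t$ we need the coordinates of $\bd z_{t-1}$ to have been convex at step $t-1$, and to \emph{chain} it into step $t+1$ we need $\bd z_t$ to additionally be nondecreasing, so both properties must be part of the induction hypothesis.

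For the base case $t=1$, the pre-activation $\bd U\hat{\bd u}_1 + \bd W\bd z_0 + \bd D_2\hat{\bd u}_0$ is affine in $\hat{\bd u}_1$, hence each coordinate is convex, and since $\bd U \ge 0$ it is also coordinatewise nondecreasing in $\hat{\bd u}_1$; applying $\sigma_1$ coordinatewise, which is convex and nondecreasing by hypothesis, preserves both properties, so each coordinate of $\bd z_1$ is convex and nondecreasing in $\hat{\bd u}_1$. For the inductive step, assume each coordinate of $\bd z_{t-1}$ is convex and nondecreasing in $(\hat{\bd u}_1,\dots,\hat{\bd u}_{t-1})$. In the pre-activation $\bd U\hat{\bd u}_t + \bd W\bd z_{t-1} + \bd D_2\hat{\bd u}_{t-1}$, the first and third terms are linear in the inputs, and the middle term $\bd W\bd z_{t-1}$ is a \emph{nonnegative} combination (because $\bd W \ge 0$) of the convex coordinates of $\bd z_{t-1}$, hence convex, and nondecreasing since $\bd W \ge 0$ and $\bd z_{t-1}$ is nondecreasing. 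Thus each coordinate of the pre-activation is convex and nondecreasing in $(\hat{\bd u}_1,\dots,\hat{\bd u}_t)$, and composing coordinatewise with the convex nondecreasing $\sigma_1$ yields the same for $\bd z_t$, closing the induction. The output is handled identically: $\bd V\bd z_t + \bd D_1\bd z_{t-1} + \bd D_3\hat{\bd u}_t$ is a nonnegative combination of convex coordinates (using $\bd V, \bd D_1 \ge 0$) plus a linear term, hence convex, so $y_t = \sigma_2(\cdot)$ with $\sigma_2$ convex nondecreasing is convex in $(\hat{\bd u}_1,\dots,\hat{\bd u}_t)$.

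Finally, since the expanded input $\hat{\bd u} = \begin{bmatrix}\bd u \\ -\bd u\end{bmatrix}$ is a linear image of $\bd u$ and precomposition with an affine map preserves convexity, $y_t$ is also convex as a function of the original control sequence $(\bd u_1,\dots,\bd u_t)$, which is the claim. I do not expect a genuine obstacle: the result is a structural consequence of the composition calculus for convex functions. The one place that demands care is the bookkeeping of the recurrent coupling — because $\bd z_t$ depends on $\bd z_{t-1}$, which in turn depends on all earlier inputs, convexity must be established \emph{jointly} over the entire input history rather than only over the current $\hat{\bd u}_t$, and the nondecreasing property must be threaded through every weight matrix, including the passthrough layers $\bd D_1, \bd D_2, \bd D_3$, so that each layer stays within the hypotheses of the vector composition rule.
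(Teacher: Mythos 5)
Your proof is correct and takes essentially the same route as the paper, which simply invokes the composition rule for convex functions together with the nonnegative-weight and expanded-input construction; your induction on $t$, carrying both convexity and monotonicity of $\bd z_t$ through the recursion, is the fully worked-out version of that one-line argument.
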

The proof of this proposition again follows directly from the composition rule of convex functions. Similarly to the ICNN case, by expanding the inputs vector to include both $\bd u$ and $-\bd u$ and restricting all weights to be non-negative, the resulted ICRNN structure is a convex and non-decreasing mapping from inputs to output.

The proposed ICRNN structure can be leveraged to represent system dynamics for close-loop control. Consider a physical system with discrete-time dynamics, at time step $t$, let's define $\mathbf s_t$ as the \emph{system states}, $\mathbf u_t$ as the \emph{control actions}, and $y_t$ as the \emph{system output}. For example, for the real-time control of a building system, $\mathbf s_t$ includes the room temperature, humidity, etc; $\mathbf u_t$ denotes the building appliance scheduling, room temperature set-points, etc; and output $y_t$ is the building energy consumption. In addition, there maybe exogenous variables that impact the output of the system, for example, outside temperature will impact the energy consumption of the building. However, since the exogenous variables are not impacted by any of the control actions we take, we suppress them in the formulation below. The time evolution of a system is described by
\begin{subequations}
	\label{eq:sys_dynamics}
	\begin{align}
	y_t &= f(\mathbf s_t, \mathbf u_t)\,,\\
	\mathbf s_{t+1} &= g(\mathbf s_t, \mathbf u_t) \label{eqn:coupling}
	\end{align}
\end{subequations}
where \eqref{eqn:coupling} describes the coupling between the current inputs to the future system states. Physical systems described by \eqref{eq:sys_dynamics} may have significant inertia in the sense that the outcome of any control actions is delayed in time and there are significant couplings across time periods. 

Since we use ICRNNs to represent both the system dynamics $g(\cdot)$ and the output $f(\cdot)$, the control variable $\bd u$ expands as $\hat{\bd u}$. The optimal receding horizon control problem at time $t$ can be written as,
\begin{subequations}
	\label{formulation}
	\begin{align}
		\underset{\mathbf u_t, \mathbf u_{t+1}, ..., \mathbf u_{t+T}}{\text{minimize}} \quad & C (\hat{\mathbf{x}}, \bd y) = \sum_{\tau = t}^{t+T} J(\hat{\mathbf x}_{\tau}, y_\tau) \label{1a}\\
		\text{subject to} \quad & y_{\tau} = f_{}(\hat{\mathbf x}_{\tau-n_w}, \hat{\mathbf x}_{\tau-n_w+1}, ..., \hat{\mathbf x}_{\tau}), \forall \tau \in [t, t+T] \label{1b}\\
		\begin{split}
			& \bd s_{\tau} =  g_{}(\hat{\mathbf x}_{\tau-n_w}, \hat{\mathbf x}_{\tau-n_w+1}, ..., \hat{\mathbf x}_{\tau-1}, \hat{\mathbf u}_{\tau})\,, \forall \tau \in [t, t+T] \label{1c}\\
		\end{split}\\
		& \hat{\mathbf x}_{\tau} = \begin{bmatrix} \mathbf s_{\tau} \\ \hat{\mathbf{u}}_{\tau} \end{bmatrix}, \; \hat{\mathbf{u}}_{\tau} = \begin{bmatrix} \mathbf u_{\tau} \\ \mathbf v_{\tau} \end{bmatrix}, \forall \tau \in [t, t+T] \label{1f}\\
		&\mathbf{v}_{\tau}=-\mathbf{u}_{\tau}, \forall \tau \in [t,t+T] \label{eqn:equality} \\
		&\mathbf s_\tau \in \mathcal{S}_{feasible}, \forall \tau \in [t, t+T] \label{1d}\\
		&\mathbf u_\tau \in \mathcal{U}_{feasible}, \forall \tau \in [t, t+T] \label{1e}
	\end{align}
\end{subequations}
where a new variable $\hat{\bd x}=[\mathbf s_t, \hat{\mathbf u}_t]$ is introduced for notational simplicity, which called \emph{system inputs}. It is the collection of system states $\mathbf s_t$ and duplicated control actions $\mathbf u_t$ and $-\mathbf{u}_t$, therefore ensuring the mapping from $\bd{u}_t$ to any future states and outputs remains convex. $J(\hat{\mathbf x}_{\tau}, y_\tau)$ is the control system cost incurs at time $\tau$, that is a function of both the system inputs $\hat{\mathbf x}_{\tau}$ and output $y_\tau$. The functions $f(\cdot)$ and $g(\cdot)$ in Eq. \eqref{1b}-\eqref{1c} are parameterized as ICRNNs, which represent the system dynamics from sequence of inputs $(\hat{\mathbf x}_{\tau-n_w}, \hat{\mathbf x}_{\tau-n_w+1}, ..., \hat{\mathbf x}_\tau)$ to the system output $y_{\tau}$, and the dynamics from control actions to system states, respectively. $n_w$ is the memory window length of  the recurrent neural network. The equations~\eqref{1f} and~\eqref{eqn:equality} duplicate the input variables $\mathbf u$ and enforce the consistency condition between $\mathbf u$ and its negation $\mathbf v$. Lastly, \eqref{1d} and \eqref{1e} are the constraints on feasible system states and control actions respectively. Note that as a general formulation, we do not include the duplication tricks on state variables, so the dynamics fitted by~\eqref{1b} and~\eqref{1c} are non-decreasing over state space, which are not equivalent to those dynamics represented by linear systems. However, since we are not restricting the control space, and we have explicitly included multiple previous states in the system transition dynamics, so the non-decreasing constraint over state space should not restrict the representation capacity by much. In Section.\ref{sec:theory} we theoretically prove the representability of proposed networks.

Optimization problem in~\eqref{formulation} is a convex optimization with respect to (w.r.t.) inputs $\mathbf u = [\bd u_{t}, ..., \bd u_{t+T}]$, provided the cost function $J(\hat{\mathbf x}_{\tau}, y_\tau) = J(\mathbf s_{\tau}, \hat{\mathbf u}_{\tau}, y_\tau)$ is convex w.r.t. $\hat{\mathbf u}_{\tau}$, and convex, nondecreasing w.r.t. $\mathbf s_{\tau}$ and $y_\tau$. A problem is convex if and only if both the objective function and constraints are convex. In the above problem,  $J(\mathbf s_{\tau}, \hat{\mathbf u}_{\tau}, y_\tau)$ is convex and nondecreasing w.r.t. $\mathbf s_{\tau}$ and $y_\tau$; $\mathbf s_{\tau}$ and $y_\tau$ are parameterized as ICRNNs, i.e., \eqref{1a} and \eqref{1b}, such that they are convex w.r.t. $\hat{\mathbf u}_{\tau}$. Therefore following the composition rule of convex functions, the objective function is convex w.r.t. inputs $\mathbf u = [\bd u_{t}, ..., \bd u_{t+T}]$. Besides, all the equality constraints~ \eqref{1f} and~\eqref{eqn:equality} are affine. Suppose both the state feasibile set~\eqref{1d} and action feasibile set~\eqref{1e} are convex, the overall optimization is convex. 

The convexity of the problem in~\eqref{formulation} guarantees that it can be solved efficiently and optimally using gradient descend method. Since both the objective function~\eqref{1a} and the constraints~\eqref{1b}-\eqref{1c} are parameterized as neural networks, and their gradients can be calculated via back-propagation with the modification where cost is propagated to the input rather than the weights of the network. For implementation, the gradients can be convinently calculated via existing modules such as  Tensorflow viaback-propagation. Let $\bd u^{*} = \{\bd u_t^{*}, \mathbf u_{t+1}^{*}, ..., \mathbf u_{t+T}^{*}\}$ be the optimal solution of the optimization problem at time $t$. Then the first element of $\bd u^{*}$ is implemented to the real-time system control, that is $\bd u_t^{*}$. The optimization problem is repeated at time $t+1$, based on the updated state prediction using $\bd u_t^{*}$, yielding a model predictive control strategy.

\section{Efficiency and representation power of ICNN} \label{sec:theory}
Besides the computational traceability of the input convex networks, as an system identification model, we are also interested its predictive accuracies and capacity. This section provides theoretical analysis on the representation ability and efficiency of input convex neural networks.
\subsection{Representation power of input convex neural network}
\begin{definition}
	Given a function $f: \mathbb{R}^d \rightarrow \mathbb{R}$, we say that the function $\hat{f}$ approximate $f$ within $\epsilon$ if $|f(\bd x)-\hat{f} (\bd x)| \leq \epsilon$ for all $\bd x$ in the domain of $f$.
\end{definition}
\begin{theorem}\label{thm:power}[Representation power of ICNN]
	For any Lipschitz convex function over a compact domain, there exists a neural network with nonnegative weights and ReLU activation functions that approximates it within $\epsilon$.
\end{theorem}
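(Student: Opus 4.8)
The plan is to split the argument into a classical convex-analytic approximation step and an exact network-construction step, and then compose them. \emph{Step~A:} show that any $L$-Lipschitz convex $f$ on a compact convex domain $K$ is uniformly within $\epsilon$ of a function of the form $g(\bd x)=\max_{i=1,\dots,N}\ell_i(\bd x)$ with each $\ell_i$ affine. \emph{Step~B:} show that any such finite maximum of affine functions is computed \emph{exactly} by a ReLU network of the Proposition~\ref{prop:oneshot} type --- nonnegative weights throughout, acting on the augmented input $\hat{\bd x}=[\bd x;\,-\bd x]$. Chaining A and B yields the claimed nonnegative-weight ReLU approximant.

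For Step~A I would use supporting hyperplanes. For each $\bd x_0$ in the relative interior of $K$, pick $\bd a_{\bd x_0}\in\partial f(\bd x_0)$; Lipschitzness forces $\|\bd a_{\bd x_0}\|\le L$, so $\ell_{\bd x_0}(\bd x)=f(\bd x_0)+\bd a_{\bd x_0}^{\top}(\bd x-\bd x_0)$ underestimates $f$ globally, is tight at $\bd x_0$, and is $L$-Lipschitz; hence $|\ell_{\bd x_0}(\bd x)-f(\bd x)|\le 2L\,\|\bd x-\bd x_0\|$, and in particular $\ell_{\bd x_0}\ge f-\epsilon$ on the ball of radius $\epsilon/(2L)$ about $\bd x_0$. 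These balls, ranging over all relative-interior centers, cover $K$ (the relative interior of a convex set is dense in it), so compactness extracts finitely many centers $\bd x_1,\dots,\bd x_N$; then $g:=\max_i\ell_{\bd x_i}$ satisfies $f-\epsilon\le g\le f$ on $K$.

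For Step~B I would build $g$ by accumulation using the identity $\max(M,\ell)=\ell+\mathrm{ReLU}(M-\ell)$. Put $h_0=0$ and $h_k=\mathrm{ReLU}\!\big(h_{k-1}+(\ell_k-\ell_{k+1})\big)$; an easy induction gives $\max(\ell_1,\dots,\ell_{k+1})=\ell_{k+1}+h_k$, so a final linear layer returning $\ell_N+h_{N-1}$ outputs exactly $g$. Now observe the sign structure: the only hidden-to-hidden (and hidden-to-output) connection is the coefficient $+1$ carrying $h_{k-1}$ forward, which is nonnegative; and every affine term that feeds a hidden layer ---each $\ell_k-\ell_{k+1}$, and $\ell_N$ at the output--- is turned into a \emph{nonnegative} linear map of $\hat{\bd x}$ by routing the positive part of each coefficient onto $\bd x$ and its negative part onto $-\bd x$ (biases are unrestricted and irrelevant to convexity). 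This is precisely the architecture of Proposition~\ref{prop:oneshot}: ReLU units, nonnegative feedforward and passthrough weights, input augmented with $-\bd x$. It realizes $g$ with no error, hence it is within $\epsilon$ of $f$ on $K$, proving the theorem.

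I expect Step~B to be the crux. The naive identity $\max(a,b)=\tfrac12(a+b)+\tfrac12\big(\mathrm{ReLU}(a-b)+\mathrm{ReLU}(b-a)\big)$ and the naive binary-tree nesting of pairwise maxes both introduce $-1$ weights as soon as the two maximands are hidden units, so neither respects nonnegativity. The accumulator trick above avoids this because at every stage one of the two maximands is still an affine function of the \emph{input} (absorbed by the $\pm\bd x$ augmentation), while the running partial maximum enters only additively with weight $+1$; the price is depth linear in $N$, which is acceptable for a representability statement (a logarithmic-depth construction would require more delicate bookkeeping and is not needed here). The remaining loose ends --- that subgradients of a Lipschitz convex function are bounded by $L$, and that the $\epsilon/(2L)$-balls about relative-interior points cover $K$ --- are standard and can be dispatched in a line or two once $K$ is taken convex.
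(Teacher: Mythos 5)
Your proposal is correct and follows essentially the same route as the paper's Appendix~B: approximate $f$ by a finite maximum of affine functions, realize that maximum exactly via the one-ReLU-per-layer accumulator recursion $h_k=\mathrm{ReLU}\bigl(h_{k-1}+(\ell_k-\ell_{k+1})\bigr)$ with output $\ell_N+h_{N-1}$, and enforce nonnegative weights by duplicating the input as $\hat{\bd x}=[\bd x;\,-\bd x]$. The only difference is that you prove the max-of-affines approximation step directly with supporting hyperplanes and a compactness argument, whereas the paper invokes density results from the literature for that lemma; both are sound.
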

\begin{lemma}\label{lem:max}
	Given a continuous Lipschitz convex function $f: \mathbb{R}^d \rightarrow \mathbb{R}$ with compact domain and $\epsilon >0$, it can be approximated within $\epsilon$ by maximum of a finite number of affine functions. That is, there exists $\hat{f}(\bd x) = \max_{i=1, ..., N} \{\bd {\mu_i}^T \bd x+ b_i\}$ such that
	$|f(\bd x) - \hat{f}(\bd x)| \leq \epsilon$ for all $\bd x \in dom f$.
\end{lemma}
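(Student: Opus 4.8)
The plan is to invoke the classical fact that a closed convex function is the pointwise supremum of its affine minorants (supporting hyperplanes), and then to thin this generally infinite family down to a finite one using compactness of the domain together with the Lipschitz bound on $f$, which translates into a uniform bound on subgradients.

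First I would reduce to the case in which the function is convex and $L$-Lipschitz on all of $\R^d$. If $D\subset\R^d$ is the compact domain and $f$ is $L$-Lipschitz on $D$, the inf-convolution
\[
\tilde f(\bd x)\;=\;\inf_{\bd y\in D}\ \bigl\{\, f(\bd y)+L\|\bd x-\bd y\|\,\bigr\}
\]
is convex (an inf-convolution of the convex function $f$, extended by $+\infty$ off $D$, with the convex function $L\|\cdot\|$), is $L$-Lipschitz on all of $\R^d$, and agrees with $f$ on $D$; hence it suffices to approximate $\tilde f$. This step is what lets us use subgradients freely without worrying about boundary effects of $D$.

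Next, for every $\bd x_0\in\R^d$ I would pick a subgradient $\bd g_{\bd x_0}\in\partial \tilde f(\bd x_0)$ and form the affine function $\ell_{\bd x_0}(\bd x)=\tilde f(\bd x_0)+\bd g_{\bd x_0}^{T}(\bd x-\bd x_0)$. The subgradient inequality gives $\ell_{\bd x_0}\le\tilde f$ everywhere, with equality at $\bd x_0$, and a one-line argument (move a short distance along $\bd g_{\bd x_0}$ and apply the Lipschitz bound) yields $\|\bd g_{\bd x_0}\|\le L$. Now take a finite $\delta$-net $\{\bd x_1,\dots,\bd x_N\}$ of $D$ with $\delta=\epsilon/(2L)$, which exists by compactness of $D$, and set $\hat f(\bd x)=\max_{i=1,\dots,N}\ \ell_{\bd x_i}(\bd x)$. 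This is exactly of the stated form, with $\bd\mu_i=\bd g_{\bd x_i}$ and $b_i=\tilde f(\bd x_i)-\bd g_{\bd x_i}^{T}\bd x_i$.

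It then remains to verify the two-sided bound on $D$. One direction is immediate: $\hat f\le\tilde f=f$ on $D$ since each $\ell_{\bd x_i}\le\tilde f$. For the other direction, given $\bd x\in D$ choose $i$ with $\|\bd x-\bd x_i\|\le\delta$; then
\[
f(\bd x)-\hat f(\bd x)\;\le\;\tilde f(\bd x)-\ell_{\bd x_i}(\bd x)\;=\;\bigl(\tilde f(\bd x)-\tilde f(\bd x_i)\bigr)-\bd g_{\bd x_i}^{T}(\bd x-\bd x_i)\;\le\;L\delta+L\delta\;=\;\epsilon,
\]
where the first term is bounded using the Lipschitz bound on $\tilde f$ and the second by Cauchy--Schwarz together with $\|\bd g_{\bd x_i}\|\le L$. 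I expect the only genuine subtlety to be the boundary/subgradient issue handled in the first step; once $f$ is replaced by its Lipschitz convex extension, the rest is a routine covering argument, and if a quantitative version is wanted later the dependence of $N$ on $\epsilon$, $L$, $d$ and $\mathrm{diam}(D)$ can simply be read off from the size of the $\delta$-net.
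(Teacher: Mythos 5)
Your proof is correct, and it takes a genuinely different (and more self-contained) route than the paper. The paper disposes of this lemma by citation: it invokes the density of piecewise-linear functions in $C(K)$, the density of \emph{convex} piecewise-linear functions in the space of continuous convex functions, and the representation of a convex piecewise-linear function as a max of affine pieces. You instead give a direct construction: extend $f$ to an $L$-Lipschitz convex function $\tilde f$ on all of $\R^d$ via the inf-convolution with $L\|\cdot\|$ (this is legitimate --- the domain of a convex function is convex, so $f+\delta_D$ is convex and the inf-convolution is convex, $L$-Lipschitz, and agrees with $f$ on $D$), take supporting affine minorants at the points of an $\epsilon/(2L)$-net of $D$, bound the subgradients by $L$, and close with the covering estimate $L\delta + L\delta = \epsilon$. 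Each of these steps checks out. What your argument buys over the paper's: it is quantitative (you can read off $N$ in terms of $\epsilon$, $L$, $d$ and $\mathrm{diam}(D)$, and you get $\|\bd\mu_i\|\le L$, which is relevant downstream since these $\bd\mu_i$ become network weights), it produces a one-sided approximation $\hat f \le f$ from below rather than merely a two-sided one, and it makes explicit where the Lipschitz hypothesis is actually used --- the cited density results hold for general continuous convex functions, but your explicit net size genuinely needs the Lipschitz constant. The cost is a slightly longer write-up and the extension step, which you correctly identify as the only real subtlety (avoiding nonexistence or blow-up of subgradients at the boundary of $D$).
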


\begin{proof}[Sketch of proof for Theorem \ref{thm:power}]
	Supposing Lemma \ref{lem:max} is true, the proof of Theorem \ref{thm:power} boils down to showing that neural network with nonnegative weights and ReLU activation functions can exactly represent a maximum of affine functions. The proof is constructive. We first construct a neural network with ReLU activation functions and both positive and negative weights, then we show that the weights between different layers of the network can be restricted to be nonnegative by a simple duplication trick. Specifically, since the weights in the input layer and passthrough layers in the ICNN can be negative, we simply add a negation of each input variable (e.g. both $\bd x$ and $-\bd x$ are given as inputs) to the network. These variables need satisfy a consistency constraint since one is the negation of the other. Since this constraint is linear, it preserves the convexity of optimization problems. The details of the proofs are given in the Appendix B.

	This proof is similar in spirit to theorems in \cite{hanin2017universal, arora2016understanding}. The key new result is a simpler construction than the one used in \cite{hanin2017universal} and the restriction to nonnegative weights between the layers.
\end{proof}

Similar to Theorem 1, an analogous result about the representation power of ICRNN can be shown for systems with convex dynamics.
Given a dynamical system described by rolled out system dynamics $y_t = f(\mathbf x_1,…, \mathbf x_t)$ is convex, then there exists a recurrent neural network with nonnegative weights and ReLU activation functions that approximates it within $\epsilon$. A broad range of systems can be captured by this model. For example, the linear quadratic (Gaussian) regulator problem can be described using a ICRNN if we identify $y$ as the cost of the regulator~\cite{skogestad2007multivariable,boyd1994linear}.\footnote{It's important to note that $y$ is usually used as the system output of a linear system, but in our context, we are using it to refer to the quadratic cost with respect to the system states and the control input.} An example of a nonlinear system is the control of electrochemical batteries. It can be shown from first principles that the degradation of these types of batteries is convex in their charge and discharge actions~\cite{Shi18TAC} and our framework offers a powerful data-driven way to control batteries found in electric vehicles, cell phones, and power systems.


\subsection{ICNN vs. convex piecewise linear fitting}
In the proof of Theorem~\ref{thm:power}, we first approximate a convex function by a maximum of affine functions then construct a neural network according to this maximum. Then a natural question is why learn a neural network and not directly the affine functions in the maximum? This approach was taken in~\cite{magnani2009convex}, where a convex piecewise-linear function (max of affine functions) are directly learned from data through a regression problem.

A key reason that we propose to use ICNN (or ICRNN) to fit a function rather than directly finding a maximum of affine functions is that the former is a much more efficient parameterization than the latter. As stated in Theorem~\ref{thm:complexity}, a maximum of $K$ affine functions can be represented by an ICNN with $K$ layers, where each layer only requires a single ReLU activation function. However, given a single layer ICNN with $K$ ReLU activation functions, it may take a maximum of $2^K$ affine functions to represent it exactly. Therefore in practice, it would be much easier to train a good ICNN than finding a good set of affine functions.
%
\begin{theorem}\label{thm:complexity}[Efficiency of Representation]
	\begin{enumerate}[noitemsep,nolistsep]
		\item Let $f_{ICNN}: \mathbb{R}^d \rightarrow \mathbb{R}$ be an input convex neural network with $K$ ReLU activation functions. Then $\Omega(2^K)$ functions are required to represent $f_{ICNN}$ using a max of affine functions.
		\item 	Let $f_{CPL}: \mathbb{R}^d \rightarrow \mathbb{R}$ be a max of $K$ affine functions. Then $O(K)$ activation functions are sufficient to represent $f_{CPL}$ exactly with an ICNN.
	\end{enumerate}
\end{theorem}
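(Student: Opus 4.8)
The theorem splits into two independent claims that call for opposite techniques: part~(2) is a constructive upper bound, and part~(1) is a lower bound obtained by counting linear pieces. I would dispatch part~(2) first. Write $f_{CPL}(\bd x)=\max_{j=1,\dots,K}\ell_j(\bd x)$ with $\ell_j(\bd x)=\bd a_j^{\top}\bd x+b_j$, and form the running maximum $g_j:=\max(\ell_1,\dots,\ell_j)$, so that $g_K=f_{CPL}$. From $\max(p,q)=q+\mathrm{ReLU}(p-q)$ we get $g_j=\ell_j+\mathrm{ReLU}(g_{j-1}-\ell_j)$. Reading this naively would force a $-1$ weight on the hidden quantity $g_{j-1}$ (or re-injecting $\ell_j$ outside the ReLU), which clashes with the nonnegativity requirement; the clean fix is to propagate instead the nonnegative ``slack'' $h_j:=g_j-\ell_j=\max(\ell_1,\dots,\ell_j)-\ell_j\ge 0$. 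Then $h_2=\mathrm{ReLU}(\ell_1-\ell_2)$, and $h_j=\mathrm{ReLU}\!\big(h_{j-1}+(\ell_{j-1}-\ell_j)\big)$ for $j\ge 3$, with $f_{CPL}=h_K+\ell_K$ as an affine readout. The weight from $h_{j-1}$ into $h_j$ is $+1$, hence nonnegative; the affine offset $\ell_{j-1}-\ell_j$ (with sign-indefinite coefficients) is supplied by the passthrough layer acting on the duplicated input $\hat{\bd x}=[\bd x;\,-\bd x]$ together with a free bias, exactly as in Proposition~\ref{prop:oneshot}. This gives an ICNN with $K-1$ hidden layers, a single ReLU each, so $O(K)$ activations suffice.

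For part~(1) I would exhibit a single-hidden-layer ICNN whose max-of-affine representation is provably expensive. Take $d=K$ and $f_{ICNN}(\bd x)=\sum_{i=1}^{K}\mathrm{ReLU}(x_i)$; this is a legitimate ICNN with $K$ ReLU units (row $i$ of the input weight selects the positive copy of $x_i$, and both the input weights and the all-ones readout are nonnegative, all biases zero). On the open orthant $R_S=\{\bd x: x_i>0\ \forall i\in S,\ x_i<0\ \forall i\notin S\}$ the function is affine with gradient $\mathbf 1_S$, and these $2^K$ gradients are pairwise distinct. On the other hand, if $f_{ICNN}=\max_{j=1,\dots,N}\{\bd\mu_j^{\top}\bd x+\nu_j\}$, then at every point of differentiability the gradient equals some $\bd\mu_j$, so at most $N$ distinct gradients occur; and a generic point of each $R_S$ lies in the interior of a region where a single affine piece of the max is active, which forces that piece to equal $\mathbf 1_S^{\top}\bd x$ and hence $\bd\mu_j=\mathbf 1_S$ for some $j$. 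Therefore $N\ge 2^K=\Omega(2^K)$.

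The nonroutine content lives in two spots. For part~(2), it is the nonnegativity constraint on inter-layer weights: the textbook running-max circuit wants a $-1$ hidden-to-hidden weight, and the only real idea is to carry the slack $h_j$ rather than $g_j$ and push every sign-indefinite coefficient onto the passthrough layers via input duplication. For part~(1), the delicate point is not the counting but justifying that every one of the $2^K$ local linear pieces of $f_{ICNN}$ must literally appear in \emph{any} max-of-affine representation, which is the ``generic point in an open region'' argument above; this is also where one sees why $d$ is allowed to scale with $K$ (with $d$ fixed, $K$ hyperplanes in general position create only $\sum_{i\le d}\binom{K}{i}$ regions, polynomial in $K$). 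I would close by noting the example is tight: $\sum_{i}\mathrm{ReLU}(x_i)=\max_{S\subseteq[K]}\mathbf 1_S^{\top}\bd x$, so exactly $2^K$ affine functions are both necessary and sufficient there.
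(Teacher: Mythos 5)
Your proof is correct, and part~(2) is essentially the paper's own argument: the paper's Theorem~\ref{thm:power} construction builds exactly your slack recursion $z_i=\sigma\bigl(z_{i-1}+(\bd a_i-\bd a_{i+1})^T\bd x+(b_i-b_{i+1})\bigr)$ (your $h_{i+1}$ is the paper's $z_i$), with the same input-duplication trick to absorb sign-indefinite coefficients into nonnegative weights on $\hat{\bd x}=[\bd x;-\bd x]$. For part~(1) you follow the same high-level idea as the paper --- a single hidden layer of $K$ ReLUs whose $2^K$ activation patterns each yield an affine piece --- but your execution is tighter in a way that matters. The paper takes a generic one-hidden-layer ICNN, lists the $2^K$ sign patterns of the units, and asserts that a max over $2^K$ pieces is required; it does not verify that all $2^K$ patterns are realized on nonempty open regions, that the resulting affine functions are pairwise distinct, or that any max-of-affine representation must contain every such piece. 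Your concrete choice $f_{ICNN}(\bd x)=\sum_{i=1}^K\mathrm{ReLU}(x_i)$ with $d=K$ settles all three points at once: the $2^K$ open orthants are nonempty, the gradients $\mathbf 1_S$ are pairwise distinct, and the observation that at any differentiability point the gradient of a max of affines must equal some $\bd\mu_j$ forces $N\ge 2^K$. Your parenthetical about why $d$ must scale with $K$ (for fixed $d$, $K$ hyperplanes cut $\mathbb{R}^d$ into only $O(K^d)$ regions) is a genuine caveat that the paper's version silently glosses over, and your closing remark that the bound is achieved exactly ($\sum_i\mathrm{ReLU}(x_i)=\max_{S\subseteq[K]}\mathbf 1_S^T\bd x$) is a nice sanity check the paper lacks.
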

The proof of this theorem is given in Appendix C.




\section{Experiments} \label{sec:experiments}
In this section, we verify the effectiveness of ICNN and ICRNN by presenting experimental results on two decision-making problems: continuous control benchmarks on MuJoco locomotion tasks~\cite{todorov2012mujoco} and energy management of reference large-scale commercial building ~\cite{crawley2001energyplus}, respectively. The proposed method can be used as a flexible building block in decision making problems, where we use ICNN to represent system dynamics for MuJoco simulators, and we use ICRNN in an end-to-end fashion to find the optimal control inputs. Both examples demonstrate that proposed method: 1) discovers the connection between controllable variables and the system dynamics or cost objectives; 2) is lightweight and sample-efficient; 3) achieves generalizable and more stable control performances compared with previous model-based reinforcement learning and simplified linear control approaches. 

\subsection{MuJoCo Locomotion Tasks}
\textbf{Experimental Setup}$\;\;$ We consider four simulated robotic locomotion tasks: \texttt{swimmer, half-cheetah, hopper, ant} implemented in MuJoCo under the OpenAI rllab framework~\cite{duan2016benchmarking}. We train and represent the locomotion state transition dynamics $\bf s$$_{t+1}=$$g(\bf s$$_{t},$$\bf u$$_t)$\footnote{Note that for notation convenience, in this example and the following building example, we use $\bd u$ to represent the expanded control vector including its negation. For system state $\bd s \in R^d$, if $d >1$, convexity means that each dimension of $\bd s$ is convex w.r.t. the function inputs.} using a 2-layer ICNN with ReLU activations, which could be integrated into the following finite-horizon control problem to find the optimal action sequence $\bf u$$_t,...,$$\bf u$$_{t+T}$ for fixed looking ahead horizon $T$:
\begin{subequations}
	\label{equ:mujoco}
	\begin{align}
	\underset{\mathbf u_t, ...,\mathbf u_{t+T}}{\text{minimize}} \quad &
	-\sum_{\tau=t}^{t+T} r(\mathbf s_{\tau},\mathbf{u}_{\tau}) \label{3a}\\
	\text{subject to} \quad
	& \mathbf s_{\tau+1} =g(\mathbf s_{\tau},\mathbf u_{\tau}), \forall \tau \in [t, t+T] \label{3b}\\
	& \mathbf u_\tau \in \mathcal{U}_{feasible}, \forall \tau \in [t, t+T]
	\end{align}
\end{subequations}

where the objective~\eqref{3a} is convex because $r(\mathbf s_{\tau},\mathbf{u}_{\tau})$ is a concave reward function related to system states such as velocity and control actions~(the detailed forms of $r(\mathbf s_{\tau},\mathbf{u}_{\tau})$ for different locomotion tasks are listed in Appendix D). To achieve better model generalization on locomotion dynamics, we also followed~\cite{nagabandi2018neural}, and applied DAGGER~\cite{ross2011reduction} to iteratively collect labeled robotic rollouts and train the supervised dyamics model \eqref{3b} using on-policy locomotion samples. See Appendix D for furthur simulation hyperparameters and experimental details. For each aggregated iterations of collecting rollouts data and training ICNN model, we validate the controller performance on standalone validation rollouts by optimally solving \eqref{equ:mujoco}.
\begin{figure}[h]
	\centering
	\includegraphics[width=0.8\columnwidth]{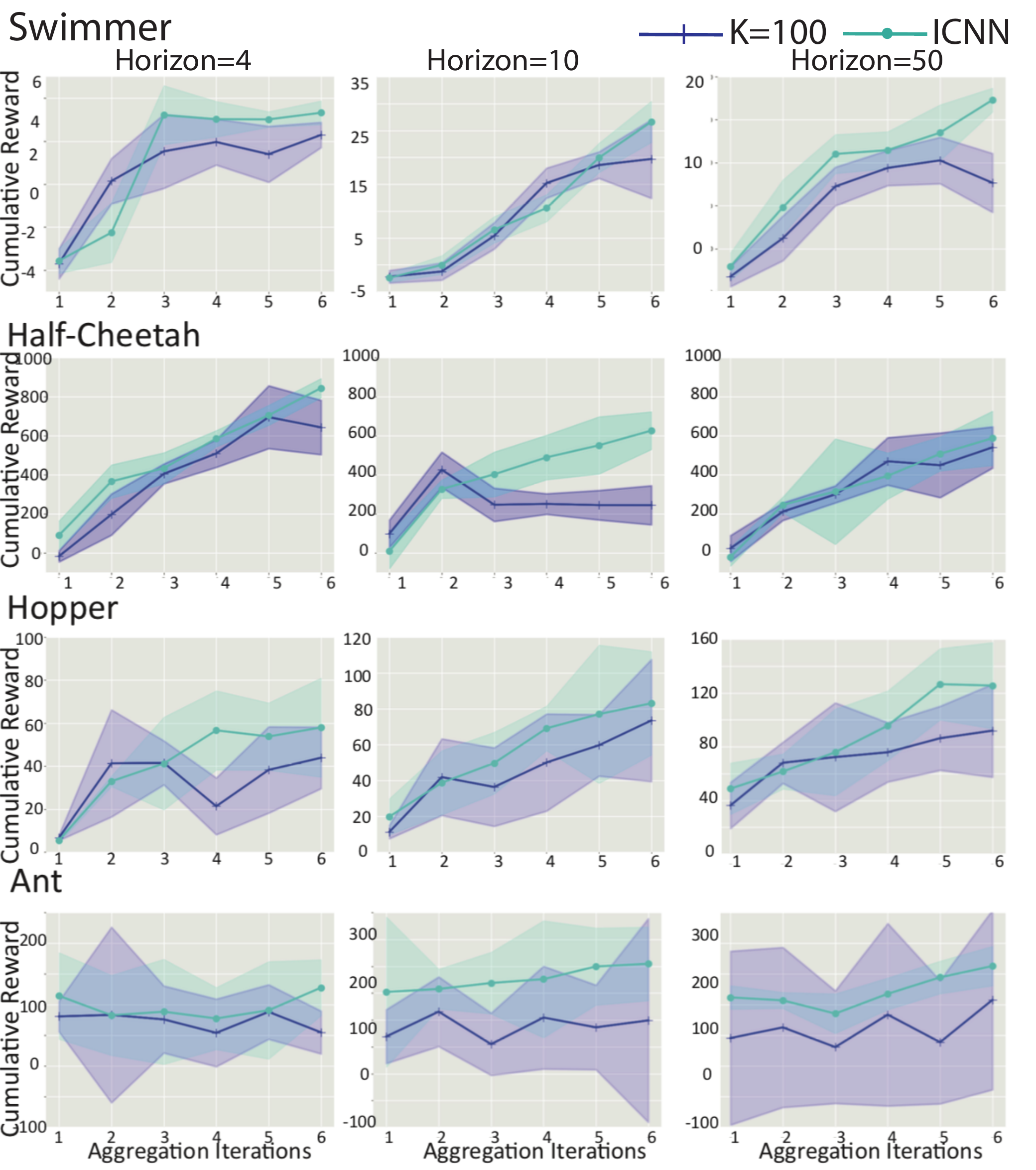}
	\caption{Average rollout reward for random-shooting method \emph{vs} ICNN on four MuJoCo tasks. The horizontal axis indicates the aggregated iteration, and vertical axis indicates average reward. Plotted curves are averaged over $3$ random seeds, and the shaded region shows the standard deviation.}
	\label{fig:mujoco}
\end{figure}

\textbf{Baselines} $\;\;$ We compare our system modeling and continuous control method with state-of-the-art model-based RL algorithm~\cite{nagabandi2018neural}, where the authors used a normal multi-layer perceptrons~(MLP) model to parameterize the system dynamics \eqref{3b}. We refer to their method as random-shooting algorithm, since they can not solve \eqref{equ:mujoco} to optimality, and they used pre-defined number of random-shooting control sequences~(denoted as $K$) to query the trained MLP and find a best sequence as the rollout policy. Such a method is able to find good control policies in the degree of $10^4$ timesteps, which are much more sample-efficient than model-free RL methods~\cite{duan2016benchmarking,mnih2015human}. To make fair comparisons with baseline method, we keep the same setup on the rollouts number and initial random action training. Our framework makes the neural networks convex w.r.t input by adding passthrough links to the 2-layer model and keeping all the layer weights nonnegative. We evaluate the performance of both algorithms on three randomly selected fixed random seeds for four tasks. Similar to the fine tuning steps in~\cite{nagabandi2018neural}, control policies found by ICNN can also be plugged in as initialized policies for subsequent model-free reinforcement learning algorithms.

\textbf{Continuous Control Performance} $\;\;$During training, we found both ICNN and MLP are able to predict robotic states quite accurately based on \eqref{3b}. This provides a good system dynamics model which is beneficial to solve control policies. The control performances are shown in Fig.~\ref{fig:mujoco}, where we compare the average reward of proposed method and random-shooting method with $K=100$ over $10$ validation rollouts during each aggregated iteration~(see Fig.~\ref{fig:mujoco2} in Appendix D.4 for random shooting performance with varying $K$). The policy found by ICNN outperforms the random-shooting method in all settings with varying horizon $T$ for all of the four locomotion tasks.

Intuitively, ICNN should perform better when the action space is larger, since random-shooting method can not search through the action space efficiently with a fixed $K$. This is illustrated in the example of \texttt{ant}, where with more training samples aggregated and MLP model representing more accurate dynamics, random-shooting gets stuck to find better control policies and there is little improvement reflected in the control performance. Moreover, since we are skipping the expensive process on 
calculating rewards of each random shooting trajectory and finding the best one, 
our method only implements ICNN inference step based on \eqref{equ:mujoco} and is much faster than random shooting methods in most settings, especially when $K$ is large~(see Table.~\ref{table:Compu_time} for wall-clock time in Appendix D.3). For instance, in the case of \texttt{Swimmer}, our proposed method only uses $\frac{1}{5}$ of time compared to~\cite{nagabandi2018neural}. This also indicates that our method is even much more sample-efficient than off-the-shelf model-free RL methods, where we use two orders of magnitude less training data to reach similar validation rewards~\cite{duan2016benchmarking, mnih2015human} (see Fig.~\ref{fig:trpo} in Appendix D.4).

\subsection{Building Energy Management}
\textbf{Experimental Setup}$\;\;$ We now move on to optimally control a dynamical system with significant inertia. We consider the real-time control problem of building's HVAC~(heating, ventilation, and air conditioning) system to reduce its energy consumption. Building energy management remains to be a hard problem in control area. The exact system dynamics are unknown and hard to model due to the complex heating transfer dynamics, time-varying environments and the scale of the system in terms of states and actions~\cite{kouro2009model}.
At time $t$, we assume the building's running profile $\mathbf x_t := [\mathbf s_t, \mathbf u_t]$ is available, where $\mathbf s_t$ denotes building system states, including outside temperature, room temperature measurements, zone occupancies and etc. $\mathbf u_t$ denotes a collection of control actions such as room temperature set points and appliance schedule.  Output is the electricity consumption $P_t$.

This is a model predictive control problem in the sense that we want to find the best control inputs that minimize the overall energy consumption of building by looking ahead several time steps. To achieve this goal, we firstly learn an ICRNN model $f(\cdot)$ of the building dynamics, which is trained to minimize the error between $P_t$ and $f(\mathbf x_{t-n_w},...,\mathbf{x}_t)$, while $n_w$ denotes the memory window of recurrent neural networks. Then we solve:
\begin{subequations}
	\label{equ:building}
	\begin{align}
		\underset{\mathbf u_t, ...,\mathbf u_{t+T}}{\text{minimize}} \quad &
		\sum_{\tau=t}^{t+T} f(\mathbf x_{\tau-n_w},...,\mathbf{x}_{\tau}) \label{2a}\\
		\text{subject to} \quad
		& \mathbf s_{\tau} =g(\mathbf x_{\tau-n_w},...,\mathbf x_{\tau-1},\mathbf u_{\tau}), \forall \tau \in [t, t+T] \label{2b}\\
		& \underline{\mathbf u}_{\tau} \leq \mathbf u_{\tau} \leq \overline{\mathbf u}_{\tau}, \forall \tau \in [t, t+T] \label{2c}\\
		& \underline{\mathbf s}_{\tau} \leq \mathbf s_{\tau}\leq \overline{\mathbf s}_{\tau}, \forall \tau \in [t, t+T] \label{2d}
	\end{align}
\end{subequations}
where the objective~\eqref{2a} is minimizing the total energy consumption in future $T$ steps ($T$ is the model predictive control horizon), and~\eqref{2b} is used for modeling building states, in which $g(\cdot)$ are parameterized as ICRNNs. Note that the formulation \eqref{equ:building} is also flexible with different loss functions. For instance, in practice, we could reuse trained dynamics model \eqref{2b}, and integrate electricity prices into the overall objective so that we could directly learn real-time actions to minimize electricity bills~(please refer to Appendix E for more results). The constraints on control actions $\mathbf u_t$ and system states $\bd s$$_t$ are given in \eqref{2c} and \eqref{2d}. For instance, the temperature set points as well as real measurements should not exceed user-defined comfort regions.

To test the performance of the proposed method, we set up a 12-story large office building, which is a reference EnergyPlus commercial building model from US Department of Energy~(DoE) \footnote{Energyplus is an open-source whole-building energy modeling software, which is developed by US DoE for standard building energy simulation}, with a total floor area of $498,584$ square feet which is divided into $16$ separate zones. By using the whole year's weather profile, we simulate the building running through the year and record ($\mathbf{x}_t$, $P_t$) with a resolution of $10$ minutes. We use $10$ months' data to train the ICRNN and subsequent $2$ months' data for testing. We use $39$ building system state variables $\mathbf s_t$~(uncontrollable), along with $16$ control variables $\mathbf u_t$. Output is a single value of building energy consumption at each time step. We set the model predictive control horizon $T=36$~(six hours). We employ an ICRNN with recurrent layer of dimension $200$ to fit the building input-output dynamics $f(\cdot)$. The model is trained to minimize the MSE between its predictions and the actual building energy consumption using stochastic gradient descent. We use the same network structure and training scheme to fit state transition dynamics $g(\cdot)$.

\textbf{Baseline} $\;\;$We set the model-based forecasting and optimization benchmark using an linear resistor-circuit~(RC) circuit model to represent the heat transfer in building systems, and solve for the optimal control actions via MPC~\cite{ma2012predictive}. At each step, MPC algorithm takes into account the forecasted states of the building based on the fitted RC model and implements the current step control actions. We also compare the performance of ICRNN against the conventionally trained RNN in terms of building dynamics fitting performance and control performance. To solve the MPC problem with conventional RNN models, we also use gradient-based method with respect to controls. However, since conventional RNN models are generally not convex from input to output, there is no guarantee to reach a global optimum (or even a local one).
\begin{figure}[!t]
	\centering
	\includegraphics[width=1.0 \columnwidth]{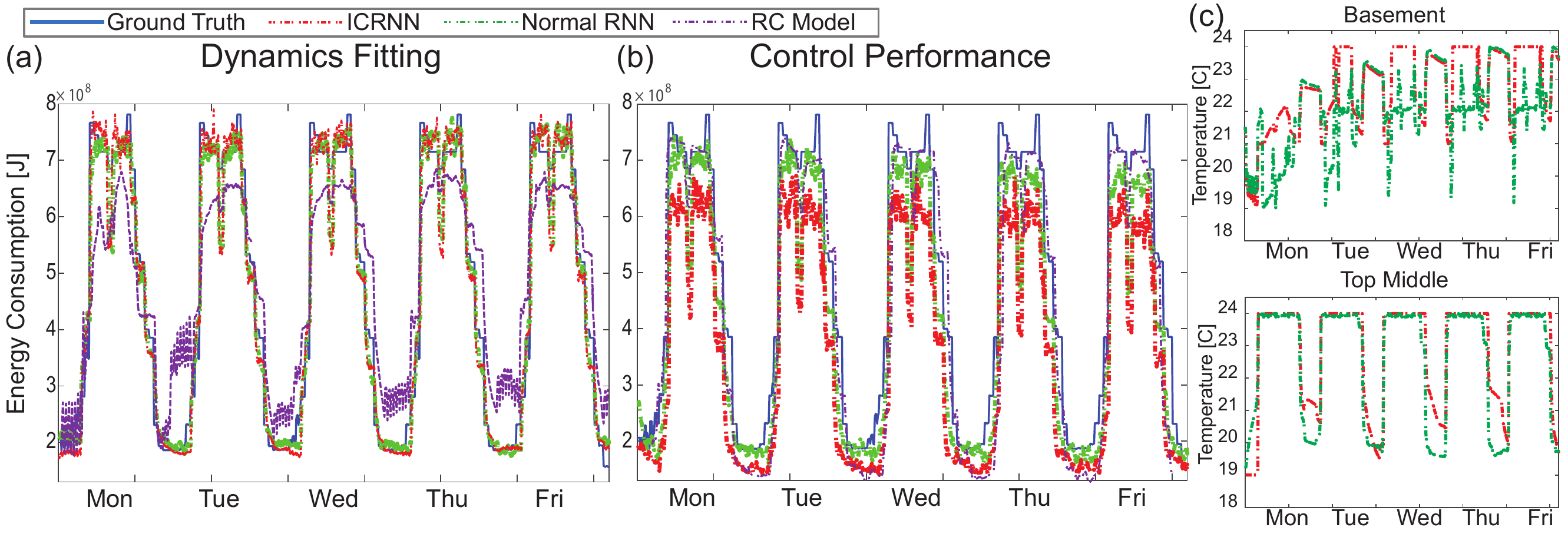}
	\caption{Results for constrained optimization of building energy management. (a) ICRNN is able to model the building dynamics as accurately as conventional RNN; (b) Compared to conventional RNN model, ICRNN finds control actions which lead to $11.52\%$ more of energy savings, and (c) ICRNN provides stable control actions while decisions generated by conventional RNN vary dramatically.}
	\label{fig:result_building}
\end{figure}

\textbf{Results}$\;\;$  In terms of the fitting performance, ICRNN provides a competitive result compared to conventional RNN model. The overall test root mean square error (RMSE) is $0.054$ for ICRNN and $0.051$ for conventional RNN, both of which are much smaller than the error made by RC model~(0.240). Fig. \ref{fig:result_building}(a) shows the fitting performance on 5 working days in test data. This illustrates the good performance of ICRNN in modeling building HVAC system dynamics.
Then by using the learned ICRNN model of building dynamics, we obtain the suggested room control actions $u_t^{*}$ by solving the optimal building control problem \eqref{equ:building}. As shown in Fig. \ref{fig:result_building}(b), with the same constraints on building temperature interval of $[19^\circ C, \, 24^\circ C]$, the building energy consumption is reduced by $23.25\%$ after implementing the new temperature set points calculated by ICRNN. On the contrary, since there is no guarantee for finding optimal control actions by optimizing over conventional RNN's input, the control solutions given by conventional RNN could only reduce $11.73\%$ of electricity. Solutions given by RC model only saves $4.07\%$ of electricity. More importantly, in Fig. \ref{fig:result_building}(c) we demonstrate the control actions outputted by our method against MPC with conventional RNN in two randomly selected building zones, the building basement and top floor central area. It shows that our proposed approach is able to find a group of stable control actions for the building system control. While in the conventional RNN case, it generates control set points which have undesirable, drastic variations.

\vspace{-10pt}
\section{Summary and Discussion} \label{sec:conclusion}
In this work we proposed a novel optimal control framework that uses deep neural networks engineered to be convex from the input to the output. This framework bridges machine learning and control by representing system dynamics using input convex (recurrent) neural networks. We show that many interesting data-driven control problems can be cast as convex optimization problems using the proposed network architecture. Experiments on both benchmark MuJoCo locomotion tasks and building energy management demonstrate our methodology's potential in a variety of control and optimization problems.

\bibliographystyle{IEEEtran}
\bibliography{bib}

\newpage
\section*{Appendix}
\setcounter{lemma}{0}
\setcounter{theorem}{0}
\subsection*{A. Toy Example}\label{secappend1}
Consider a synthetic example which contains two circles of noisy input data $\bd u \in \R^2$, along with discrete data label $y\in\{0,1\}$ which is based on input coming from inner loop~($y=0$) or outer loop~($y=1$). Suppose a decision maker is interested in finding the $\bd u$ that maximizes the probability of $y$ being $0$. This optimization problem can be solved by firstly learning a neural network classifier from $\bd u$ to $y$, and then to find the $\bd u$ point which minimizes the output of the neural network. More specifically, let $f_{NN}$ be a conventional neural network and $f_{ICNN}$ be an ICNN. Then the objective becomes minimizing $f_{NN}(\bd u)$ or $f_{ICNN}(\bd u)$.

Figure~\ref{fig:toy2} shows the decision boundaries for $f_{NN}$ and $f_{ICNN}$, respectively. These networks are composed of 2 hidden layers, with $200$ neurons in each layer, and are trained using the same random seed, same number of samples (100) until loss convergence. The decision boundaries of a conventional network have many ``zigzags'', which makes solving \eqref{eqn:opt_single} challenging, especially if $\bd u$ is constrained. In contrast, the ICNN has convex level sets (by construction) as decision boundaries, which leads to a convex optimization problem.
\begin{figure}[h!]
	\begin{center}
		\includegraphics[scale=0.38]{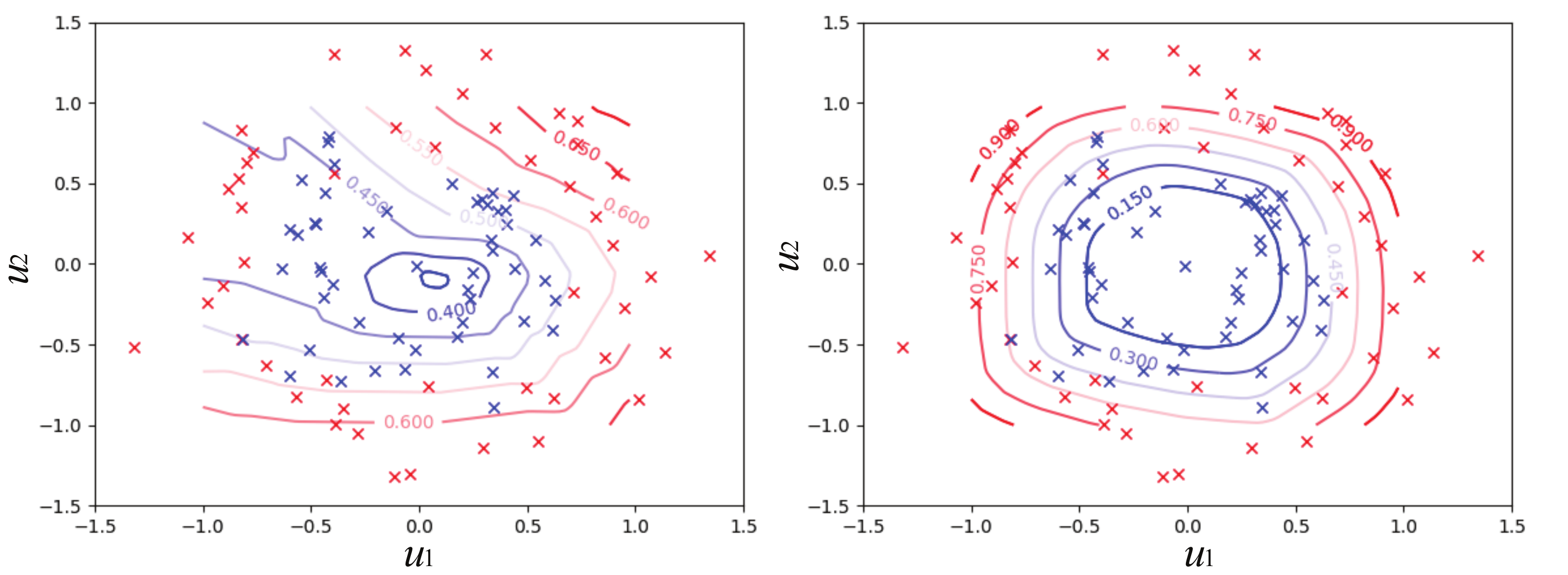}
	\end{center}
	\caption{Toy example on classifying circle data with label 0~(blue cross) and label 1~(red cross) along with conventional neural networks~(left) and ICNN ~(right) decision contour lines. A decision maker is interested in finding a $\bd u$ that has the highest probability of being labeled $0$.}
	\label{fig:toy2}
\end{figure}

\subsection*{Appendix B. Proof of Theorem 1}
\begin{proof}
	Lemma \ref{lem:max} follows from well established facts in function analysis stating that piecewise linear functions are dense in the space of all continuous functions over compact sets~\cite{royden2010real} and convex piecewise linear functions are dense in the space of all convex continuous functions~\cite{cox1971algorithm,gavrilovic1975optimal}. Using the fact that convex piecewise linear functions can be represented as a maximum of affine functions~\cite{magnani2009convex,wang2004general} gives the desired result in the lemma. 

	Lemma 1 shows that all continuous Lipschitz convex functions $f(\bd x): \mathbb{R}^d \rightarrow \mathbb{R}$ over convex compact sets can be approximated using maximum of affine functions. Then it suffices to show that an ICNN can exactly represent a maximum of affine functions. To do this, we first construct a neural network with ReLU activation function with both positive and negative weights that can represent a maximum of  affine functions. Then we show how to restrict all weights to be nonnegative.
	
	As a starting example, consider a maximum of two affine functions
	\begin{equation} \label{eqn:max_two}
	f_{CPL}(\bd x) = \max\{\bd a_1^T \bd x+b_1, \bd a_2^T \bd x + b_2\}.
	\end{equation}
	To obtain the exact same function using a neural network, we first rewrite it as
	\begin{equation}
	f_{CPL}(x) =  (\bd a_2^T \bd x+b_2) + \max\left((\bd a_1-\bd a_2)^T \bd x + (b_1-b_2), 0\right).
	\end{equation}
	Now define a two-layer neural network with layers $\bd z_1$ and $\bd z_2$ as shown in Fig.~\ref{fig:twolayer}:
	\begin{subequations}
		\label{two_layer_NN}
		\begin{align}
		z_1 &= \sigma \left((\bd a_1-\bd a_2)^T \bd x + (b_1-b_2)\right)\,,\\
		z_2 &= z_1+ \bd a_2^T \bd x + b_2\,
		\end{align}
	\end{subequations}
	
	where $\sigma$ is the ReLU activation function and the second layer is linear. By construction, this neural network is the same function as $f_{CPL}$ given in \eqref{eqn:max_two}.
	
	\begin{figure}[h]
		\centering
		\includegraphics[width=0.6 \columnwidth]{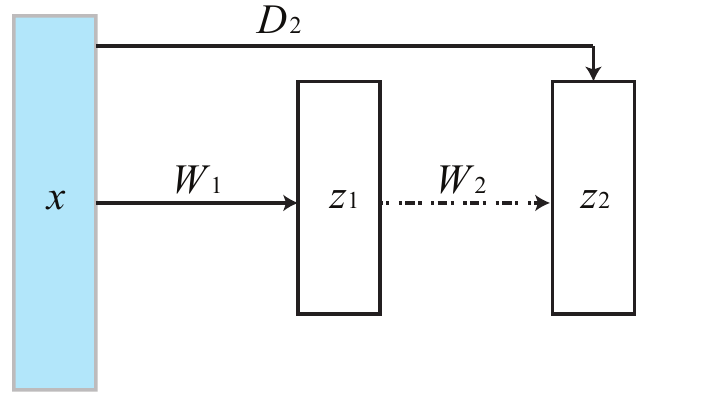}
		\caption{A simple two-layer neural networks. In alignment with \eqref{two_layer_NN}, $W_1$ denotes the first-layer weights $\bd a_1-\bd a_2$ and bias $b_1-b_2$, and $W_2$ denotes the linear second layer. Direct layer is denoted as $D_2$ for weights $\bd a_2$ and bias $b_2$.}
		\label{fig:twolayer}
	\end{figure}
	
	The above argument extends directly to a maximum of  $K$ linear functions. Suppose
	\begin{equation}
	f_{CPL}(\bd x)= \max\{\bd a_1^T \bd x + b_1, ..., \bd a_K^T \bd x+b_K\}
	\end{equation}
	Again the trick is to rewrite $f_{CPL}(\bd x)$ as a nested maximum of affine functions. For notational convenience,  let $L_i = \bd a_i^T \bd x+b_i$, $L'_i = L_i-L_{i+1}$. Then
	\begin{align*}
	f_{CPL} &= \max\{L_1, L_2, ..., L_K\}\, \nonumber\\
	& = \max\{\max\{L_1, L_2, ..., L_{K-1}\}, L_K\}\,\nonumber\\
	& = L_K + \sigma \left(\max\{L_1, L_2, ..., L_{K-1}\}-L_K\right) \nonumber\\
	& = L_K + \sigma\left(\max\{\max\{L_1, L_2, ..., L_{K-2}\}, L_{K-1}\}-L_K,0\right)\,\nonumber\\
	& = L_K + \sigma\left(L_{K-1}-L_K+\sigma\left(\max\{L_1, L_2, ..., L_{K-2}\}-L_{K-1}, 0\right),0\right) \nonumber\\
	&= ... \nonumber\\
	& = L_K + \sigma\left(L'_{K-1}+\sigma\left(L'_{K-2}+\sigma\left(...\sigma\left(L'_2+\sigma\left(L_1-L_2, 0\right), 0\right), ..., 0\right), 0\right),0\right).
	\end{align*}
	The last equation describes a $K$ layer neural network, where the layers are:
	\begin{align*}
	z_1 &= \sigma\left(L_1-L_2, 0\right) =  \sigma\left((\bd a_1-\bd a_2)^T \bd x + (b_1-b_2)\right)\,,\\
	z_2 & =  \sigma\left(L_2'+ z_1, 0\right) =  \sigma \left(z_1+ (\bd a_2-\bd a_3)^T \bd x+(b_2-b_3)\right)\,,\\
	... &... \\
	z_{i} &= \sigma\left(L'_i+z_{i-1}, 0\right) = \sigma \left(z_{i-1}+ (\bd a_i- \bd a_{i+1})^T \bd x+(b_{i}-b_{i+1})\right)\,,\\
	... &... \\
	z_{K} &= z_{K-1} + L_K = h_{K}\left(z_{K-1}+ L_{K}\right) = \left(z_{K-1}+ \bd a_K^T \bd x+b_K \right).
	\end{align*}
	Each layer of of this neural network uses only a single activation function.
	
	Although the above neural network exactly represent a maximum of  linear functions, it is not convex since the coefficients between layers could be negative. In particular, each layer involves an inner product of the form $(\bd a_i- \bd a_{i+1})^T \bd x$ and the coefficients are not necessarily nonnegative. To overcome this, we simply expand the input to include $\bd x$ and $-\bd x$. Namely, define a new input $\hat{\bd x} \in \R^{2d}$ as
	\begin{equation}\label{eqn:hatx}
	\hat{\bd x} = \begin{bmatrix} \bd x \\ -\bd x\end{bmatrix}.
	\end{equation}
	Then any inner product of the form $\bd h^T \bd x$ can be written as
	\begin{align*}
	\bd h^T \bd x & = \sum_{j=1}^d h_i x_i \\
	& = \sum_{i:h_i \geq 0} h_i x_i + \sum_{i:h_i < 0} h_i x_i \\
	& = \sum_{i:h_i \geq 0} h_i x_i+ \sum_{i:h_i <0 } (-h_i)(-x_i) \\
	& = \sum_{i:h_i \geq 0} h_i \hat{x}_i+ \sum_{i:h_i <0 } (-h_i)(\hat{x}_{i+d}),
	\end{align*}
	where all coefficients are nonnegative in the above sum.
	
	Therefore any inner product between a coefficient vector and the input $\bd x$ can be written as an inner product between a nonnegative coefficient vector and the expanded input $\hat{\bd x}$. Therefore, without loss of generality, we can limit all of the weights between layers to be nonnegative, and thus the neural network to be input convex. Note that in optimization problems, we need to enforce consistency in $\hat{\bd x}$ be including \eqref{eqn:hatx} as a constraint. However, this is a linear equality constraint, which maintains the convexity of the optimization problem.
	
\end{proof}

\subsection*{Appendix C. Proof of Theorem \ref{thm:complexity}}
\begin{proof}
	The second statement of Theorem \ref{thm:complexity} directly follows the construction in the proof of Theorem \ref{thm:power}, which shows that a maximum of $K$ affine functions can be represent by a $K$-layer ICNN (with a single ReLU function in each layer). So it remains to show the first statement of Theorem \ref{thm:complexity}.
	
	To show that a maximum of affine functions can require exponential number of pieces to approximate a function specified by an ICNN with $K$ activation functions, consider a network with 1 hidden layer of K nodes and the weights of direct ``passthrough'' layers are set to 0:
	\begin{equation}
	f_{ICNN}(\bd x) = \sum_{i=1}^{K} w_{1i} \sigma(\bd w_{0i}^T \bd x + b_i)\,,
	\end{equation}
	It contains $3K$ parameters: $\bd w_{0i}$, $w_{1i}$ and $b_i$, where $\bd w_{0i} \in \R^{d}$ and $w_{1i}, b_i \in \R$.
	
	In order to represent the same function by a maximum of affine functions,  we need to assess the value of every activation unit $\sigma(\bd w_{0i}^{T} \bd x + b_i)$. If $\bd w_{0i}^{T} \bd x + b_i \geq 0$, $\sigma(\bd w_{0i}^{T} \bd x + b_i) = \bd w_{0i}^{T} \bd x + b_i$; otherwise, $\sigma(\bd w_{0i}^{T} \bd x + b_i) = 0$. In total, we have $2^K$ potential combinations of piecewise-linear function, including
	\begin{align*}
	L_1\ \ = & \left(\sum_{i=1}^{K} w_{1i} \bd w_{0i}\right)^T \bd x + \sum_{i=1}^{K} w_{1i} b_i\,, \text{if all\ \ } \bd w_{0i}^{T} \bd x + b_i \geq 0\\
	L_2\ \ =& \left(\sum_{i=2}^{K} w_{1i} \bd w_{0i}\right)^T \bd x + \sum_{i=2}^{K} w_{1i} b_i\,, \text{if\ } \bd w_{01}^T \bd x + b_1 < 0 \text{ and all other } \bd w_{0i}^{T} \bd x + b_i \geq 0\\
	L_3\ \ =& \left(w_{11} \bd w_{01}  + \sum_{i=3}^{K} w_{1i} \bd w_{0i}\right)^T \bd x + w_{1i} b_i + \sum_{i=3}^{K} w_{1i} b_i\,,\text{if\ } \bd w_{02}^{T} \bd x + b_2 < 0 \text{\ and other } \bd w_{0i}^{T} \bd x + b_i \geq 0\\
	& \ \ \ \ \cdots \cdots\,,\\
	L_{2^K} =&0\,, \text{  if all }\bd w_{0i}^{T} \bd x + b_i < 0.
	\end{align*}
	So the following maximum over $2^K$ pieces is required to represent the single linear ICNN:
	$$\max\{L_1, L_2, ..., L_{2^K}\}. $$
\end{proof}

\newpage
\subsection*{Appendix D. Experimental Details on MuJoCo Tasks}
\subsubsection*{D.1 Data Collection}
\label{sec:data}
\textbf{Rollout Samples}$\quad$ To train the neural network dynamics model~(both ICNN and MLP), we first collect initial rollout data using fully random action sequences $\textbf u_t \sim \text{Uniform} [\textbf -\textbf 1, \textbf 1]$ with a random chosen initial state. During the data collection process in aggregated iterations, to improve model generalization and explore larger state spaces, we add Gaussian noise to the optimal control policies $\textbf{u}_t= \textbf{u}_t+\mathcal{N}(0,0.001)$. 

\textbf{Neural Networks Training}
We represent the MuJoCo dynamics with a 2-hidden-layer neural networks with hidden sizes $512-512$. The passthrough links of ICNN are of same size of corresponding added layers. We train both models using Adam optimizer with a learning rate 0.001 and a mini-batch size of 512. Due to the different complexity of MuJoCo tasks, we vary training epochs and summarize the training details in Table.~\ref{table:environment}.  

\subsubsection*{D.2 Environment Details}
In all of the MuJoCo locomotion tasks, $\bd s$ includes state variables such as robot positions, velocity along each axis; $\bd u$ includes action efforts for the agent. We use standard reward functions $r(\textbf{s}_t, \textbf{u}_t)$ for moving tasks, which could be also promptly calculated in \eqref{3a} as the control objective. For the ease of neural network training and action sampling, we normalize all the action and states in the range of $[-\textbf 1,\, \textbf 1]$. We use DAGGER~\cite{ross2011reduction} for $6$ aggregated iterations for all cases, and during aggregated iteration, we use a split of $10\%$ random rollouts collected as described in \ref{sec:data}, and other $90\%$ coming from past iterations' control policies~(on-policy rollouts). Note that we use $10$ random control sequences in our method to initialize the policy finding approach and avoid the long computation time for taking gradients on finding optimal $\textbf u_t$. Other environment parameters are described in Table.~\ref{table:environment}.

\begin{table}
	\setlength\extrarowheight{5pt}
	\centering
	\begin{tabular}
		{P{2.5cm}| P{2.4cm}P{2.4cm}P{2.4cm}P{2.4cm}}
		Environment &Swimmer&Half-Cheetah&Hopper&Ant \\
		\toprule[0.4mm]
		Reward Function &$s_{t+1}^{vel}-0.5{||\frac{\textbf{u}}{50}}||_2^2$ & $s_{t+1}^{vel}-0.05{||\frac{\textbf{u}}{1}}||_2^2$& $s_{t+1}^{vel}+1-0.005{||\frac{\textbf{u}}{200}}||_2^2$&
		$s_{t+1}^{vel}+0.5-0.005{||\frac{\textbf{u}}{150}}||_2^2$\\
		Rollout Horizon & 333 & 1000 &200&1000\\
		Rollout Numbers & 25 &10 &30 &400\\
		Training Epochs & 60 & 60 &40 & 60\\
		\bottomrule[0.4mm]
	\end{tabular}
	\caption{Environment and training details for four MuJoCo locomotion tasks.}
	\label{table:environment}
\end{table}

\subsubsection*{D.3 Wall-Clock Time}
In Table.\ref{table:Compu_time}, we show the average run time for the total of $6$ aggregation iterations over $3$ runs. Finding control policies via ICNN is using less or equal training time compared to random-shooting method with $K=100$, while achieving better task  rewards than $K=1000$ for different control horizons. All the experiments are running on a computer with 8 cores Intel I7 6700 CPU. Note that we do not use GPU for accelerating ICNN optimization step \eqref{equ:mujoco}, which could furthur improve our method's efficiency.

\begin{table}
	\centering
	\begin{tabular}
		{P{1.8cm} P{1.5cm}P{1.5cm}P{1.5cm}P{1.5cm}}
		\toprule[0.3mm]
		\multicolumn{5}{c}{Swimmer} \\
		&{$K=100$}&{$K=300$}&{$K=1000$}&ICNN \\
		\toprule[0.2mm]
		{$H=4$} &18.36  &18.48    &40.20 & \textbf{16.41} \\
		{$H=10$} &21.74  & 25.41   & 71.49 &  \textbf{18.71}\\
		{$H=50$} &40.01  & 70.31   & 169.49 & \textbf{36.24} \\
		\toprule[0.2mm]
		\multicolumn{5}{c}{Half-Cheetah}\\
		&{$K=100$}&{$K=300$}&{$K=1000$}&ICNN\\
		\toprule[0.2mm]
		{$H=4$} & \textbf{34.40} & 47.72 & 88.49 & \textbf{34.93} \\
		{$H=10$} & 48.86 & 74.60 & 181.34 &\textbf{36.39}\\
		{$H=50$} &113.58 &275.61& 816.32 & \textbf{83.66} \\
		\toprule[0.2mm]
		\multicolumn{5}{c}{Hopper}\\
		&{$K=100$}&{$K=300$}&{$K=1000$}&ICNN\\
		\toprule[0.2mm]
		{$H=4$} &\textbf{5.48} & 6.30 & 7.76 & \textbf{5.61} \\
		{$H=10$} & 5.97 &7.89 &9.34 &\textbf{5.14}\\
		{$H=50$} & 10.89 &14.77& 38.02 & \textbf{9.16} \\
		\toprule[0.2mm]
		\multicolumn{5}{c}{Ant}\\
		&{$K=100$}&{$K=300$}&{$K=1000$}&ICNN\\
		\toprule[0.2mm]
		{$H=4$} & 399.39 & 415.51 & 433.35 & \textbf{349.13} \\
		{$H=10$} & 480.60 &481.34 &511.93 &\textbf{459.63}\\
		{$H=50$} &979.73 &1024.5& 1075.52 & \textbf{929.5} \\
		\bottomrule[0.3mm]
		
	\end{tabular}
	\caption{Average wall clock time~(in minutes) for random-shooting model-based reinforcement learning method and ICNN.}
	\label{table:Compu_time}
\end{table}

\begin{figure}[h!]
	\centering
	\includegraphics[width=0.9 \columnwidth]{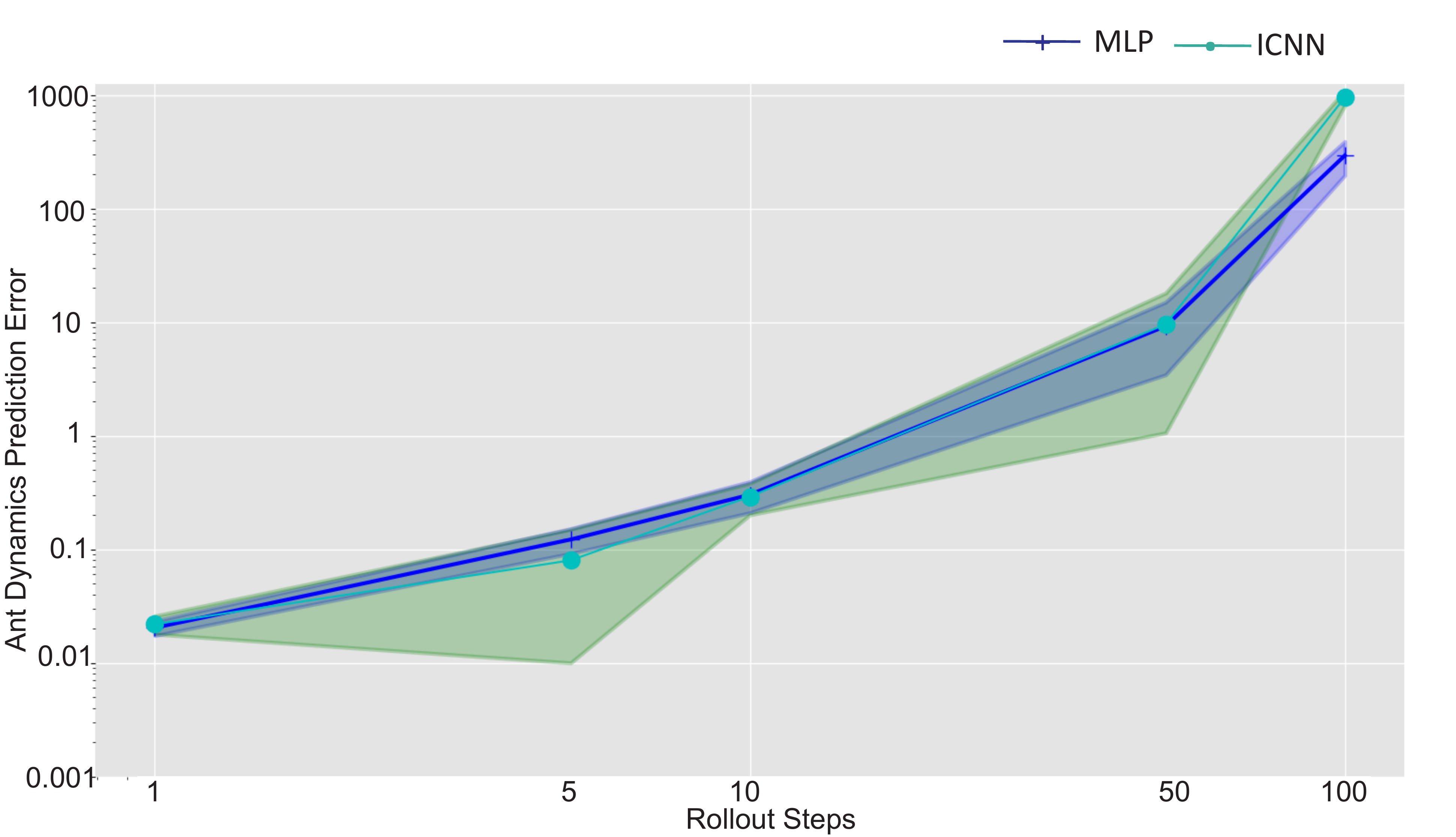}
	\caption{Multistep prediction errors by ICNN and MLP. X-Axis and Y-Axis are of log scale. }
	\label{fig:mujoco_fitting}
\end{figure}

\subsubsection*{D.4 Details of Simulation Results }

\textbf{MuJoCo Dynamics Modeling} $\quad$In Fig.~\ref{fig:mujoco_fitting}, we compare the ICNN and normal MLP fitting performance of the MuJoCo dynamics modeling \eqref{3b}, which illustrates that both MLP and ICNN are able to find a data-driven dynamics model for \texttt{ant} MuJoCo agent, which is of the most complex dynamics we considered for locomotion tasks. The multi-step prediction errors of ICNN is comparable to normal MLP used in~\cite{nagabandi2018neural} for different length of rollout steps.

\begin{figure}[h!]
	\centering
	\includegraphics[width=  \columnwidth]{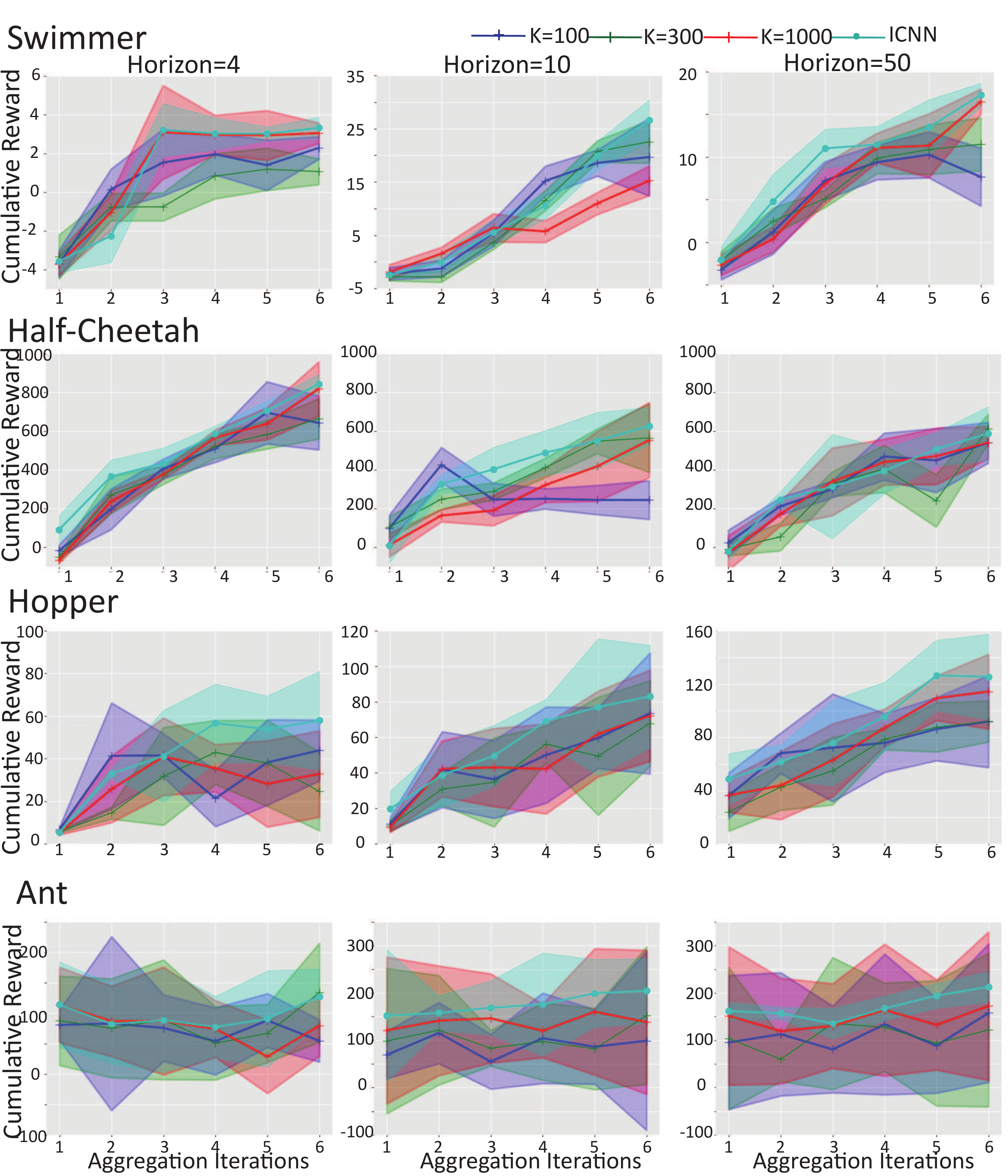}
	\caption{Cumulative reward for one validation rollout of random shooting method \emph{vs} ICNN }
	\label{fig:mujoco2}
\end{figure}

\textbf{More Simulation Results}
In Fig.~\ref{fig:mujoco2}, we compare our control method with random-shooting approach with varying settings on shooting number $K$, which shows that our approach is more efficient in finding control policies.

In Fig.~\ref{fig:trpo}, we compare our control  method with the rllab implementation of trust region policy optimization (TRPO)~\cite{schulman2015trust}, an end-to-end deep reinforcement learning approach for mujoco locomotion tasks. More specifically, we compare the algorithms' performances with relatively few available rollout samples. While our approach quickly learns the dynamics and then find control actions via optimization steps, TRPO is hard to learn the actions directly with few provided rollouts. Similarly to the model-based and model-free (Mb-Mf) approach described in~\cite{nagabandi2018neural}, our control method could provide good initialization samples for the model-free algorithms, which could greatly accelerate the training process of model-free algorithms.

\begin{figure}[hb!]
	\centering
	\includegraphics[width= 1.0 \columnwidth]{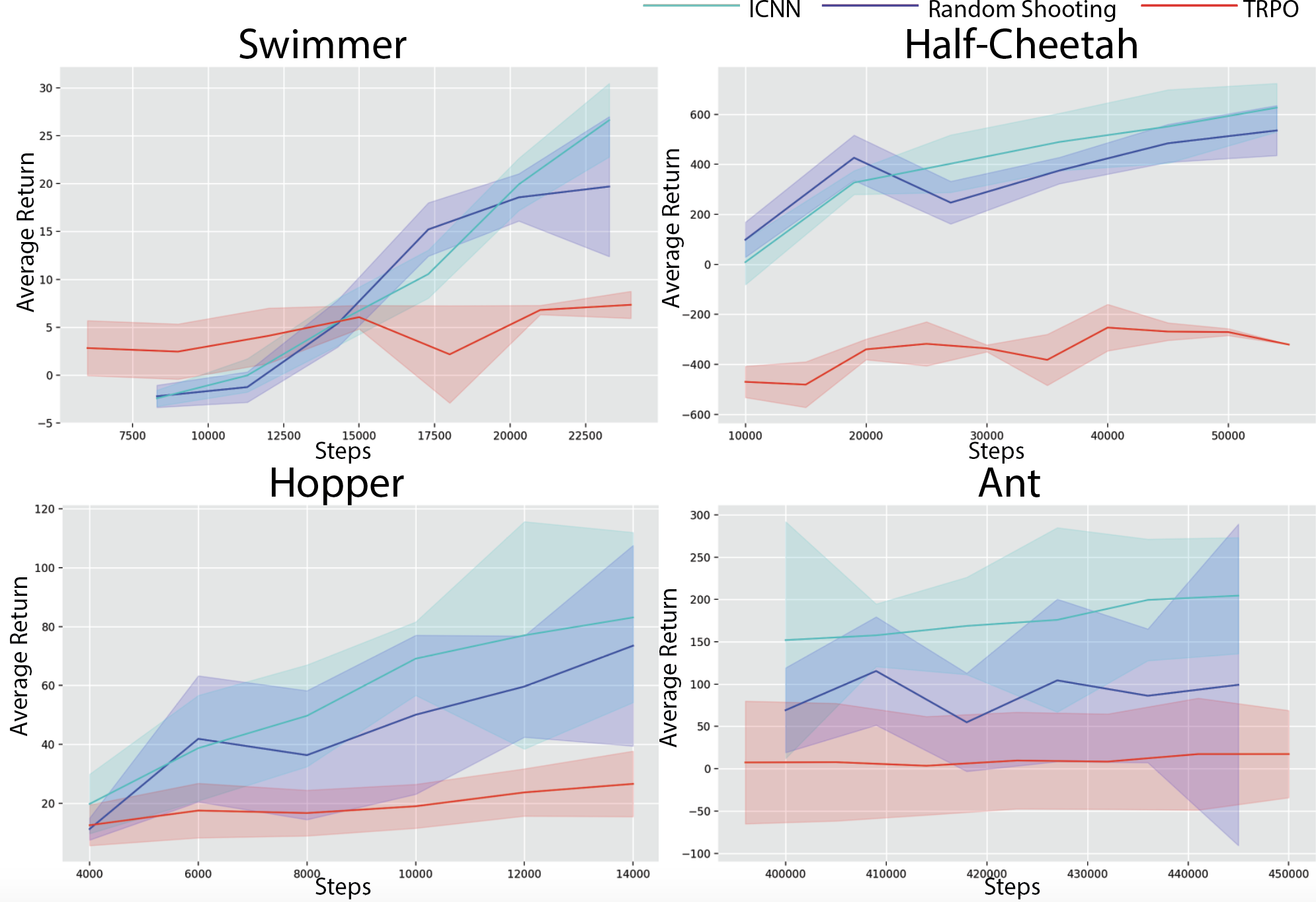}
	\caption{Average  return for control of Mujoco tasks by ICNN, random-shooting method~\cite{nagabandi2018neural} and TRPO~\cite{schulman2015trust}.}
	\label{fig:trpo}
\end{figure}

\newpage
\subsection*{Appendix E. Details on Building Energy Management}
\subsubsection*{E.1 Minimizing Electricity Costs} $\quad$To further demonstrate the potential of our proposed control framework in dealing with different real world tasks, we modify the setting of the building control example in Section 4.2 to a more complicated case. Instead of directly minimize the total energy consumption of building, we aim to minimize the total energy cost of building which subject to a varying time-of-use electrical price $\bd \lambda$.

\begin{figure}[hb!]
	\centering
	\includegraphics[width=0.58 \columnwidth]{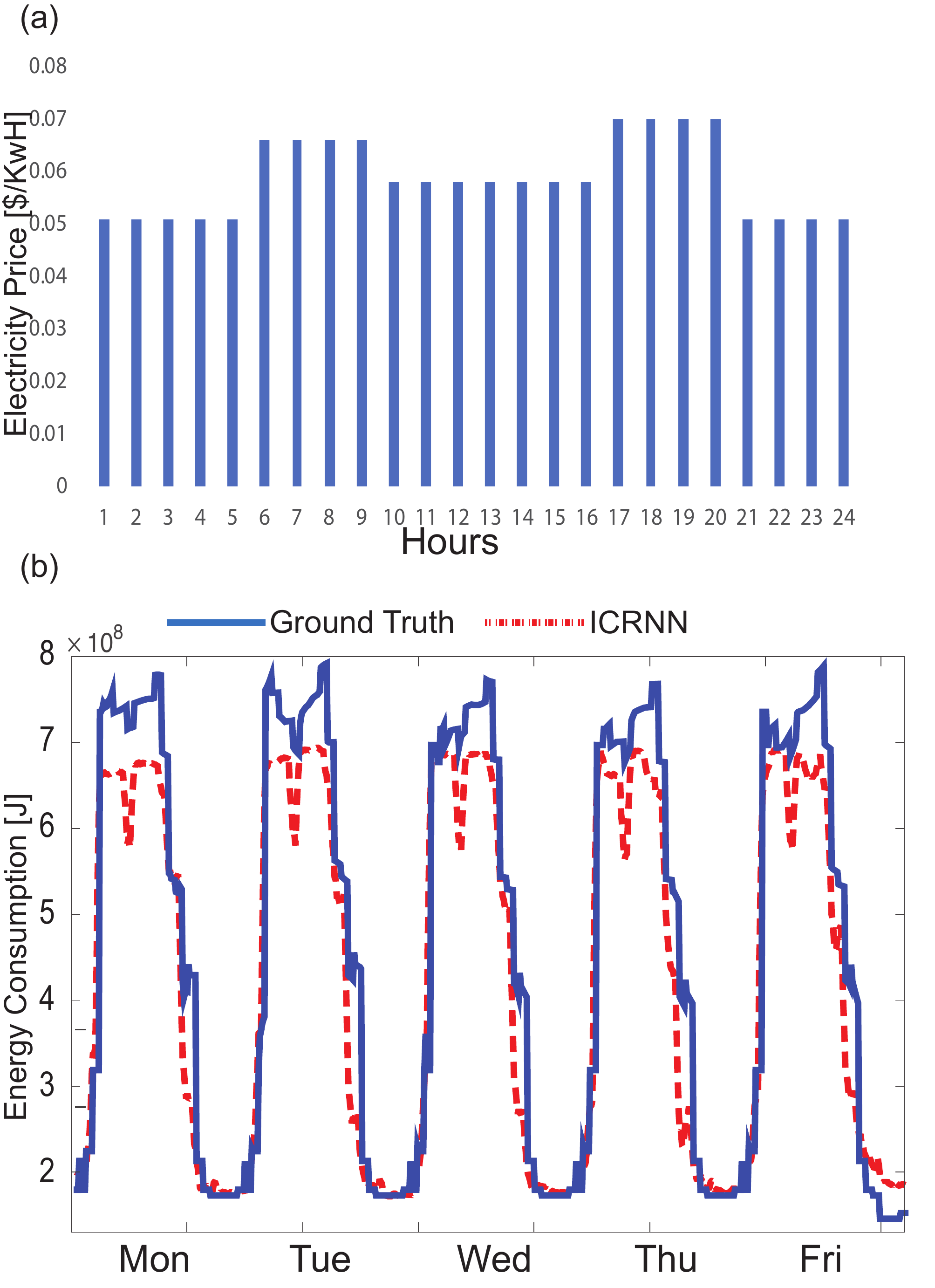}
	\caption{(a) 24 hour price signal along with (b) optimization results on one-week electricity usage of building using ICRNN.}
	\label{fig:building_new}
\end{figure}

The optimization problem in (7) should be re-written as,
\begin{subequations}
	\label{formulation4}
	\begin{align}
	\underset{\mathbf u_t, ...,\mathbf u_{t+T}}{\text{minimize}} \quad &
	\sum_{\tau=0}^{T} \lambda_\tau \cdot f(\mathbf x_{t+\tau-n_w},...,\mathbf{x}_{t+\tau}) \label{4a}\\
	\text{subject to} \quad
	& \mathbf s_{t+\tau} =g(\mathbf x_{t+\tau-n_w},...,\mathbf x_{t+\tau-1},\mathbf u_{t+\tau}), \forall \tau \label{4c}\\
	& \underline{\mathbf u}_{t+\tau} \leq \mathbf u_{t+\tau} \leq \overline{\mathbf u}_{t+\tau}, \forall \tau \label{4d}\\
	& \underline{\mathbf s}_{t+\tau} \leq \mathbf s_{t+\tau}\leq \overline{\mathbf s}_{t+\tau}, \forall \tau \label{4e}
	\end{align}
\end{subequations}
where the objective \eqref{4a} is minimizing the total energy cost of building in future $T$ steps ($T$ is the model predictive control horizon) subject to time-of-use electricity price $\lambda_\tau$, and \eqref{4c} is used for modeling building states, in which $g(\cdot)$ are parameterized as ICRNNs. Same as the previous building control case, we have constraints on both control actions $\mathbf u_t$ and system states $\bd s$$_t$ are given in \eqref{4d} and \eqref{4e}. For instance, the temperature set points as well as real measurements should not exceed user-defined comfort regions. In Fig.~\ref{fig:building_new} we visualize our model flexibility by using Seattle's Time-of-Use~(TOU) price from Seattle City Light~\footnote{http://www.seattle.gov/light/}, and minimizing one week's electricity bills. We could see ICRNN capture the long term relationships between control variables and final costs, and raise the energy consumption during off-peak price a little, but reduce the energy consumption during peak hours.

\subsubsection*{E.2 Control Constraints Effects}
In Fig.~\ref{fig:control_effect} we add one more comparison on the control constraints effects on the final control performance by using ICRNN. Interestingly, with different set point constraints, the ICRNN finds similar solutions for off-peak electricity usage, which may correspond to necessary energy consumptions, such as lightning and ventilation. Moreover, when we set no constraints on the system, it would cut down more than $80\%$ of total energy during peak hours.

\begin{figure}[!htbp]
	\centering
	\includegraphics[width=0.7 \columnwidth]{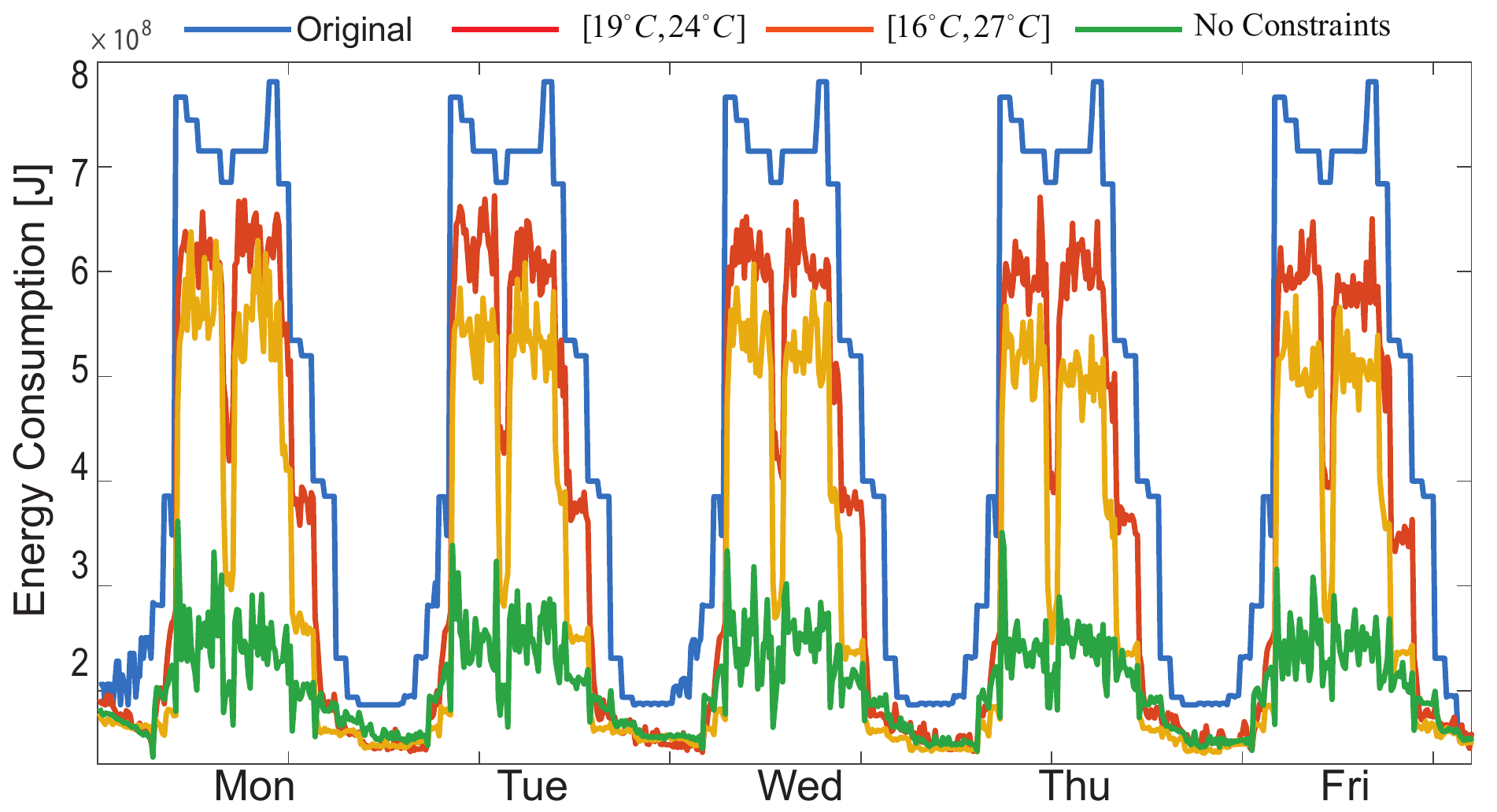}
	\caption{Results on one-week electricity usage of building using input convex neural network control method based upon different control constrains.}
	\label{fig:control_effect}
\end{figure}

\end{document}